\def\pone{\hbox{\rm\textsf{1}\hspace*{-0.9ex}%
  \rule{0.15ex}{1.3ex}\hspace*{0.9ex}}}
\def\mapright#1{\smash{\mathop{\longrightarrow}\limits\sp{#1}}}
\def\Id{\operatorname{Id}}
\newtheorem{theorem}{Theorem}[section]
\newtheorem{lemma}[theorem]{Lemma}
\begin{document}

\title{(para)-K\"ahler Weyl structures}
\author{P. Gilkey and S. Nik\v cevi\'c}
\address{PG: Mathematics Department, University of Oregon, Eugene OR 97403 USA\\
   E-mail: gilkey@uoregon.edu}
\address{SN: Mathematical Institute, Sanu, Knez Mihailova 36, p.p. 367, 11001 Belgrade,
 Serbia\\ Email: stanan@mi.sanu.ac.rs}

\begin{abstract}{We work in both the complex
and in the para-complex categories and examine (para)-K\"ahler Weyl structures in both the geometric and in the algebraic settings. The higher
dimensional setting is quite restrictive. We show that any (para)-K\"ahler Weyl algebraic curvature tensor is in fact Riemannian in dimension
$m\ge6$; this yields as a geometric consequence that any (para)-K\"ahler Weyl geometric structure is trivial for $m\ge6$. By contrast, the
$4$-dimensional setting is, as always, rather special as it turns out that there are (para)-K\"ahler
Weyl algebraic curvature tensors which are not Riemannian if $m=4$. Since every (para)-K\"ahler Weyl
algebraic curvature tensor is geometrically realizable and since every $4$-dimensional Hermitian
manifold admits a unique (para)-K\"ahler Weyl structure, there are also non-trivial $4$-dimensional Hermitian (para)-K\"ahler Weyl manifolds.
\\ MSC: 53B05, 15A72, 53A15, 53B10, 53C07, 53C25.}\end{abstract}

\maketitle

\section{Introduction}

Let $\nabla$ be a torsion free connection on a pseudo-Riemannian manifold $(M,g)$ of even dimension $m=2\bar m\ge4$. The triple $(M,g,\nabla)$ is
said to be a {\it Weyl structure} if there exists a smooth $1$-form $\phi$ so that $\nabla g=-2\phi\otimes g$. Such a geometric
structure was introduced by Weyl~\cite{W22} in an attempt to unify gravity
with electromagnetism. Although this approach failed for physical reasons, these geometries are still studied for their intrinsic interest
\cite{AI03,C01,I02,N05,O11}; they also appear in the mathematical physics literature
\cite{DT02,I00,M04}. Weyl geometry is relevant to submanifold geometry \cite{M02} and to contact geometry \cite{G09}. The
pseudo-Riemannian setting also is important
\cite{AR09,M01,S09} as are para-complex geometries
\cite{D06,FF11}. See also \cite{CP00,K11,PS91,PT93} for related results. 
The literature in the field is vast and we can only give a flavor of it for reasons of brevity. We shall be primarily interested in the
Hermitian setting. However since there are applications to higher signature geometry, we include the pseudo-Hermitian context as well; similarly we
treat para-Hermitian geometries as they can be studied with little additional effort.

Section~\ref{sect-1.1} of the Introduction deals with the real setting. In Theorem~\ref{thm-1}, we
recall the basic theorems of geometric realizability for affine, Riemannian, and Weyl curvature models and in
Theorem~\ref{thm-2} provide various characterizations of the notion of a trivial Weyl structure. 
Section~\ref{sect-1.2} treats the (para)-K\"ahler setting. In
Theorem~\ref{thm-3} we recall geometric realizibility results for (para)-K\"ahler affine and (para)-K\"ahler Riemannian
curvature models. Theorem~\ref{thm-4} presents results in the geometric setting for (para)-K\"ahler Weyl manifolds.
Theorem~\ref{thm-5} is one of the two main results of this paper: every (para)-K\"ahler
curvature model is geometrically realizable. The proof of Theorem~\ref{thm-5} relies on a curvature
decomposition result; the second main result of the paper, Theorem~\ref{thm-6}, discusses the space
of (para)-K\"ahler Weyl algebraic curvature tensors. 

\subsection{Riemannian, Affine, and Weyl geometry}\label{sect-1.1}
Let $(V,\langle\cdot,\cdot\rangle)$ be an inner product space of signature $(p,q)$ and dimension $m=p+q$; an inner product of signature $(0,4)$
is positive definite. A
$4$-tensor
$A\in\otimes^4V^*$ is said to be a {\it Riemannian algebraic curvature tensor} if $A$ satisfies the symmetries of the
Riemann curvature tensor, namely:
\begin{eqnarray}
&&A(x,y,z,w)+A(y,x,z,w)=0\,,\label{eqn-1}\\
&&A(x,y,z,w)+A(y,z,x,w)+A(z,x,y,w)=0\,,\label{eqn-2}\\
&&A(x,y,z,w)=A(w,z,x,y)\,.\label{eqn-3}
\end{eqnarray}
Let $\mathfrak{R}(V)$ be the subspace of
$\otimes^4V^*$ which consists of all tensors satisfying these relations. We say that a triple
$\mathcal{R}:=(V,\langle\cdot,\cdot\rangle,A)$ is a {\it Riemannian curvature model} if $A\in\mathfrak{R}(V)$.
One says that $\mathcal{R}$ is {\it geometrically
realizable} by a pseudo-Riemannian manifold if there is a point $P$ of some pseudo-Riemannian manifold $(M,g)$ and if
there is an isomorphism
$\Phi:V\rightarrow T_PM$ so: 
$$\Phi^*g_P=\langle\cdot,\cdot\rangle\quad\text{and}\quad \Phi^*R^g_P=A$$
where
$R^g$ is the curvature tensor of the Levi-Civita connection $\nabla^g$ on $M$. 

Affine differential geometry extends Riemannian geometry. A
pair $(M,\nabla)$ is said to be an {\it affine manifold} if $\nabla$ is a
torsion free connection on the tangent bundle $TM$. The curvature $R^\nabla$ of the connection
$\nabla$ then satisfies the identities of Equations~(\ref{eqn-1}) and (\ref{eqn-2}) but need no
longer satisfy Equation~(\ref{eqn-3}); if $A\in\otimes^4V^*$, one says $A$ is an
{\it affine algebraic curvature tensor} if $A$ satisfies
Equations~(\ref{eqn-1}) and (\ref{eqn-2}) and one lets $\mathfrak{A}(V)$ be
the set of all such tensors. Note that the corresponding {\it
curvature operator} $\hat A$ and the curvature tensor $A$ are related by the identity
$$\langle\hat A(x,y)z,w\rangle=A(x,y,z,w)\,.$$
The pair
$\mathcal{A}:=(V,A)$ is said to be an {\it affine curvature model} if $A\in\mathfrak{A}(V)$; such an $\mathcal{A}$ is
said to be {\it geometrically realizable by an affine manifold} if there is a
point
$P$ of some affine manifold $(M,\nabla)$ and if there is an isomorphism $\Phi:V\rightarrow T_PM$ so that
$\Phi^*R_P^\nabla=A$. 

Weyl geometry is in a sense midway between Riemannian geometry and affine geometry. A triple
$(M,g,\nabla)$ is said to be a {\it Weyl manifold} if $(M,g)$ is a pseudo-Riemannian manifold,
if $(M,\nabla)$ is an affine manifold, and if there exists a smooth $1$-form $\phi$ on $M$ so
that the structures are related by the equation:
\begin{equation}\label{eqn-4}
\nabla g=-2\phi\otimes g\,.
\end{equation}
Define the {\it Ricci-tensor} $\rho=\rho_\nabla$ and the {\it alternating Ricci tensor} $\rho_a=\rho_{a,\nabla}$ by:
\begin{eqnarray*}
&&\rho(x,y):=\operatorname{Tr}(z\rightarrow R(z,x)y),\\
&&\rho_a(x,y):=\textstyle\frac12\{\rho(x,y)-\rho(y,x)\}\,.
\end{eqnarray*}
There is an additional
curvature symmetry which pertains in Weyl geometry (see, for example, the discussion in \cite{GNS11}):
\begin{equation}\label{eqn-5}
R(x,y,z,w)+R(x,y,w,z)=-\textstyle\frac4m\rho_a(x,y)g(z,w)\,.
\end{equation}
The defining $1$-form $\phi$ is related to the curvature by the equation:
\begin{equation}\label{eqn-6}
d\phi=-\textstyle\frac1m\rho_a\,.
\end{equation}
Let
$\mathfrak{W}(V)\subset\otimes^4(V^*)$ be space of $4$-tensors satisfying
Equations~(\ref{eqn-1}), (\ref{eqn-2}), and (\ref{eqn-5}); these are the Weyl algebraic curvature tensors. If
$A\in\mathfrak{R}$, then $\rho_a=0$ and
$A(x,y,z,w)+A(x,y,w,z)=0$. Consequently:
$$\mathfrak{R}(V)\subset\mathfrak{W}(V)\subset\mathfrak{A}(V)\,.$$
A triple $\mathcal{W}:=(V,\langle\cdot,\cdot\rangle,A)$ is said to be a {\it Weyl curvature model} if
$A\in\mathfrak{W}(V)$. The notion of geometric realizability is
defined analogously in this setting.

We refer to \cite{BV07,BNGS06,GNS11} for the proof of the following result; the first two assertions are, of course, well
known:
\begin{theorem}\label{thm-1}
\ \begin{enumerate}
\item Every Riemannian curvature model is geometrically realizable by a pseudo-Riemannian manifold.
\item Every affine curvature model is geometrically realizable by an affine manifold.
\item Every Weyl curvature model is geometrically realizable by a Weyl manifold.
\end{enumerate}\end{theorem}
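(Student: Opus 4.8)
The plan is to prove all three statements by explicit local constructions near the origin $O\in\mathbb R^m$, identifying $V$ with $T_OM$ through the coordinate frame $\{\partial_1,\dots,\partial_m\}$ so that $g(O)=\langle\cdot,\cdot\rangle$. For assertion (2), given $A\in\mathfrak A(V)$ I would look for a torsion-free connection $\nabla$ with linear Christoffel symbols $\Gamma_{ij}{}^k(x)=c_{aij}{}^kx^a$, $c_{aij}{}^k=c_{aji}{}^k$; since $\Gamma(O)=0$ the quadratic terms in the curvature drop out and $R_{ijk}{}^l(O)=c_{ijk}{}^l-c_{jik}{}^l$, so it suffices to solve this linear system for $c$ with symmetric lower indices, the first Bianchi identity \eqref{eqn-2} being precisely the compatibility condition, and a suitable symmetrization of $A$ does the job. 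For assertion (1), given $A\in\mathfrak R(V)$ I would prescribe the quadratic Taylor jet of the metric by $g_{ij}(x)=\langle\partial_i,\partial_j\rangle-\tfrac13\sum_{k,l}A_{ikjl}x^kx^l$ (the sign dictated by the curvature convention in use); the quadratic form $x\mapsto\sum_{k,l}A_{ikjl}x^kx^l$ is symmetric in $i,j$ by \eqref{eqn-3}, so $g$ is a genuine metric with $g(O)=\langle\cdot,\cdot\rangle$ and $\partial g(O)=0$, hence $\Gamma^g(O)=0$ and the Levi-Civita curvature at $O$ reduces to the usual expression in the second derivatives of $g$, which by \eqref{eqn-1}, \eqref{eqn-2}, \eqref{eqn-3} equals $A$. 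These two constructions are classical.

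The substantive case is (3). Given $A\in\mathfrak W(V)$, put $\rho_a:=\rho_{a,A}$. I would first choose a linear $1$-form $\phi=\phi_{ij}x^j\,dx^i$ with vanishing symmetric part and antisymmetric part determined by $2\phi_{[ij]}=-\tfrac1m(\rho_a)_{ij}$, so that $d\phi|_O=-\tfrac1m\rho_a$ as required by \eqref{eqn-6}. To any metric $g$ one associates its Weyl connection, obtained by adding to $\nabla^g$ a universal first-order expression in $\phi$ arranged so that \eqref{eqn-4} holds; its curvature decomposes as $R^\nabla=R^g+(\text{a term linear in }\nabla^g\phi)+(\text{a term quadratic in }\phi)$. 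Evaluating at $O$ for a metric $g$ with $\partial g(O)=0$, the quadratic term drops because $\phi(O)=0$, and the linear term becomes a universal tensor $T_{\rho_a}\in\mathfrak W(V)$ built from $\partial\phi|_O$, hence from $\rho_a$ alone; moreover $\rho_{a,T_{\rho_a}}=\rho_a$ by \eqref{eqn-6} applied to the Weyl connection of a flat $g$ with this same $\phi$.

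It then remains to realize the ``Riemannian part''. Because $A$ and $T_{\rho_a}$ both lie in $\mathfrak W(V)$ with the same alternating Ricci tensor, identity \eqref{eqn-5} shows that $A-T_{\rho_a}$ is skew in its last pair of arguments; together with \eqref{eqn-1} and \eqref{eqn-2} this forces \eqref{eqn-3}, so $A-T_{\rho_a}\in\mathfrak R(V)$. By part (1) I would realize $A-T_{\rho_a}$ by a metric $g$ on $\mathbb R^m$ with $\partial g(O)=0$; then the Weyl manifold $(\mathbb R^m,g,\nabla)$, with $\nabla$ the Weyl connection of $(g,\phi)$, satisfies $R^\nabla|_O=R^g|_O+T_{\rho_a}=(A-T_{\rho_a})+T_{\rho_a}=A$ together with \eqref{eqn-4} by construction, so it realizes $\mathcal W$. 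The one point that requires care is the bookkeeping for the Weyl connection: writing it down with the sign conventions that yield \eqref{eqn-4}, expanding $R^\nabla$ in terms of $R^g$, $\nabla^g\phi$, and $\phi\otimes\phi$, and verifying that $T_{\rho_a}$ really is a universal tensor in $\rho_a$ with $\rho_{a,T_{\rho_a}}=\rho_a$; once that is in hand, the orthogonal splitting $\mathfrak W(V)=\mathfrak R(V)\oplus\Lambda^2V^*$ and the reduction of (3) to (1) are immediate, and the affine and Riemannian constructions present no difficulty.
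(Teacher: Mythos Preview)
The paper does not prove Theorem~\ref{thm-1}; it cites \cite{BV07,BNGS06,GNS11} and remarks that (1) and (2) are classical. Your proposal supplies a self-contained argument, and the outline is correct. Parts (1) and (2) are the standard quadratic-jet constructions; the choice $c_{ijk}{}^l=\tfrac13(A_{ijk}{}^l+A_{ikj}{}^l)$ makes your claim ``a suitable symmetrization of $A$ does the job'' explicit.

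For (3), your tensor $T_{\rho_a}$ is, up to a scalar, exactly $\Xi(\rho_a)$ from Equation~(\ref{eqn-10}): writing $\psi_{ij}=\partial_i\phi_j|_O$ (antisymmetric by your choice), the linear-in-$\nabla^g\phi$ part of the Weyl curvature at $O$ is literally $\Xi(\psi)$, and $\psi=-\tfrac1{2m}\rho_a$. Thus your splitting $A=(A-T_{\rho_a})+T_{\rho_a}$ is the Higa decomposition $\mathfrak W=\mathfrak R\oplus\mathfrak P$ of Theorem~\ref{thm-7}(2) realized geometrically, the $\mathfrak R$-piece carried by the second jet of $g$ and the $\mathfrak P$-piece by $d\phi$. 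Your verification that $\rho_{a,T_{\rho_a}}=\rho_a$ via Equation~(\ref{eqn-6}) applied to the Weyl connection of a flat metric is a clean substitute for computing $\rho_a\circ\Xi$ directly, and your insistence that $\phi_{ij}$ be purely antisymmetric is exactly what is needed to make $T_{\rho_a}$ depend only on $d\phi|_O$ rather than on the full first jet of $\phi$. This is presumably the argument of \cite{GNS11}, and in any case it is sound.
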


Weyl geometry is a conformal theory; if $g_1=e^{2f}g$ is conformally equivalent to $g$ and if $(M,g,\nabla)$ is a Weyl
manifold, then $(M,g_1,\nabla)$ is again a Weyl manifold with associated $1$-form $\phi_1$ given by
$\phi_1=\phi-df$. One has the following well known result characterizing trivial Weyl structures (see, for example,
\cite{GNS11}):
\begin{theorem}\label{thm-2}
Let $(M,g,\nabla)$ be a Weyl manifold with $H^1(M;\mathbb{R})=0$. The following assertions are
equivalent and if any is satisfied, then the Weyl structure is said to be trivial.
\begin{enumerate}
\item $d\phi=0$.
\item $\nabla=\nabla^{g_1}$ for some conformally equivalent metric $g_1$.
\item $R^\nabla\in\mathfrak{R}$.
\end{enumerate}
\end{theorem}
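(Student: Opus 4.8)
The plan is to establish the cycle of implications $(1)\Rightarrow(2)\Rightarrow(3)\Rightarrow(1)$, drawing on the conformal transformation law $\phi_1=\phi-df$ recalled just above the statement, on Equation~(\ref{eqn-6}) relating $d\phi$ to the alternating Ricci tensor, and on the uniqueness of the Levi-Civita connection.

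For $(1)\Rightarrow(2)$ I would argue as follows. Assume $d\phi=0$. Since $H^1(M;\mathbb{R})=0$, de Rham's theorem shows that the closed $1$-form $\phi$ is exact, so $\phi=df$ for some $f\in C^\infty(M)$. Put $g_1:=e^{2f}g$. By the conformal invariance of Weyl structures, $(M,g_1,\nabla)$ is again a Weyl manifold, now with associated $1$-form $\phi_1=\phi-df=0$. Substituting $\phi_1=0$ into Equation~(\ref{eqn-4}) gives $\nabla g_1=0$, so $\nabla$ is a torsion-free connection compatible with $g_1$; by uniqueness of the Levi-Civita connection, $\nabla=\nabla^{g_1}$.

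For $(2)\Rightarrow(3)$: if $\nabla=\nabla^{g_1}$ with $g_1=e^{2f}g$, then $\nabla$ has the same curvature operator as the Levi-Civita connection of $g_1$, so the $4$-tensor $R^\nabla$ obtained by lowering an index with $g$ equals $e^{-2f}$ times the Riemann curvature $4$-tensor of $g_1$; the latter lies in $\mathfrak{R}$, and $\mathfrak{R}$ is a linear subspace, hence closed under pointwise scaling, so $R^\nabla\in\mathfrak{R}$. For $(3)\Rightarrow(1)$: if $R^\nabla\in\mathfrak{R}$, then~(\ref{eqn-1}) and~(\ref{eqn-3}) force the Ricci tensor $\rho$ to be symmetric, whence $\rho_a=0$, and Equation~(\ref{eqn-6}) gives $d\phi=-\frac1m\rho_a=0$. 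This closes the cycle.

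I do not expect a genuine obstacle here, the assertion being classical; the points to watch are that the hypothesis $H^1(M;\mathbb{R})=0$ is invoked exactly once, in $(1)\Rightarrow(2)$, to pass from closed to exact; that the conformal weight in $g_1=e^{2f}g$ and the sign in $\phi_1=\phi-df$ must be tracked so that $\phi=df$ indeed yields $\phi_1=0$; and that, in $(2)\Rightarrow(3)$, membership in $\mathfrak{R}$ is a metric-independent, scaling-invariant condition, so the conformal factor introduced by lowering with $g$ rather than $g_1$ is harmless.
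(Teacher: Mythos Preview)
Your argument is correct and complete: the cycle $(1)\Rightarrow(2)\Rightarrow(3)\Rightarrow(1)$ is established cleanly, the hypothesis $H^1(M;\mathbb{R})=0$ is used exactly where it must be, and the observation in $(2)\Rightarrow(3)$ that lowering with $g$ versus $g_1$ only introduces a scalar factor (hence preserves membership in the linear space $\mathfrak{R}$) is the right way to handle that step.

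As for comparison with the paper: there is nothing to compare. The paper does not prove Theorem~\ref{thm-2}; it simply records it as a well-known characterization and refers the reader to \cite{GNS11}. Your write-up therefore supplies what the paper omits, and does so along the standard lines one would expect from that reference.
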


\subsection{K\"ahler geometry}\label{sect-1.2}
We now pass from the real to the (para)-complex setting. Let $V$ be a real vector space of even dimension $m=2\bar m$. A
complex structure on
$V$ is an endomorphism $J_-$ of
$V$ so
$J_-^2=-\Id$. Similarly, a para-complex structure on
$V$ is an endomorphism $J_+$ of $V$ so $J_+^2=\Id$ and $\operatorname{Tr}(J_+)=0$; this trace-free condition is automatic in
the complex setting but must be imposed in the para-complex setting. It is convenient to introduce the notation $J_\pm$ in
order to have a common formulation in both contexts although we shall never be considering both structures simultaneously.
In the geometric setting,
$(M,J_\pm)$ is said to be an {\it almost (para)-complex manifold} if $J_\pm$ is a smooth endomorphism of the tangent bundle
so that
$(T_PM,J_\pm)$ is a (para)-complex structure for every
$P\in M$. The almost (para)-complex structure $J_\pm$ is said to be {\it integrable} and the pair $(M,J_\pm)$ is said to be a
{\it (para)-complex manifold} if there are coordinate charts
$(x^1,y^1,...,x^{\bar m},y^{\bar m})$ covering $M$ so that:
\begin{equation}\label{eqn-7}
J_\pm\left\{\frac{\partial}{\partial x_i}\right\}
    =\frac{\partial}{\partial y_i}\quad\text{and}\quad
J_\pm\left\{\frac{\partial}{\partial y_i}\right\}
    =\pm\frac{\partial}{\partial
x_i}\quad\text{for}\quad 1\le i\le\bar m\,.
\end{equation}
If $(M,J_\pm)$ is an almost (para)-complex manifold and if $\nabla$ is a torsion free connection on $M$, then
$(M,J_\pm,\nabla)$ is said to be a {\it K\"ahler affine manifold} if $\nabla J_\pm=0$; this assumption then implies that
$J_\pm$ is integrable. The curvature satisfies an extra symmetry in this setting:
\begin{equation}\label{eqn-8}
R(x,y,z,w)=\mp R(x,y,J_\pm z,J_\pm w)\,.
\end{equation}
A (para)-complex
pseudo-Riemannian manifold
$(M,g,J_\pm)$ is said to be a {\it (para)-K\"ahler Hermitian} manifold if 
$J_\pm^*g=\mp g$ and $\nabla^gJ_\pm=0$.
Finally, a (para)-complex-Riemannian Weyl manifold $(M,g,J_\pm,\nabla)$ is said to be a {\it (para)-K\"ahler Weyl}
manifold if $\nabla J_\pm=0$.

We now pass to the algebraic context. Define the space of (para)-K\"ahler tensors $\mathfrak{K}_\pm$, the space of
(para)-K\"ahler affine algebraic curvature tensors
$\mathfrak{K}_{\pm,\mathfrak{A}}$, the space of (para)-K\"ahler Riemannian algebraic curvature tensors
$\mathfrak{K}_{\pm,\mathfrak{R}}$, and the space of (para)-K\"ahler Weyl algebraic curvature tensors 
$\mathfrak{K}_{\pm,\mathfrak{W}}$ by setting,
respectively:
\begin{eqnarray*}
&&\mathfrak{K}_\pm:=\{A\in\otimes^4V^*:A(x,y,z,w)=\mp A(x,y,J_\pm,z,J_\pm w)\},\\
&&\mathfrak{K}_{\pm,\mathfrak{A}}:=\mathfrak{K}_\pm\cap\mathfrak{A},\quad
  \mathfrak{K}_{\pm,\mathfrak{R}}:=\mathfrak{K}_\pm\cap\mathfrak{R},\quad
  \mathfrak{K}_{\pm,\mathfrak{W}}:=\mathfrak{K}_\pm\cap\mathfrak{W}\,.
\end{eqnarray*}

A triple
$\mathcal{K}_{\mathcal{A}}=(V,J_\pm,A)$ is said to be a {\it (para)-K\"ahler affine curvature model}
if
$(V,J_\pm)$ is (para)-complex and if $A\in\mathfrak{K}_{\pm,\mathfrak{A}}$. A quadruple
$\mathcal{K}_{\mathcal{R}}=(V,\langle\cdot,\cdot\rangle,J_\pm,A)$ is said to be a {\it (para)-K\"ahler Hermitian curvature
model} if
$J_\pm^*\langle\cdot,\cdot\rangle=\mp\langle\cdot,\cdot\rangle$ and if $A\in\mathfrak{K}_{\pm,\mathfrak{R}}$. A quadruple
$\mathcal{K}_{\mathcal{W}}=(V,\langle\cdot,\cdot\rangle,J_\pm,A)$ is said to be a {\it (para)-K\"ahler Weyl curvature
model} if
$J_\pm^*\langle\cdot,\cdot\rangle=\mp\langle\cdot,\cdot\rangle$ and if $A\in\mathfrak{K}_{\pm,\mathfrak{W}}$.

Let $\langle\cdot,\cdot\rangle$ have signature $(p,q)$; if $p=0$, then $\langle\cdot,\cdot\rangle$ is positive definite while
if $q=0$, then $\langle\cdot,\cdot\rangle$ is negative definite. In the
para-complex setting,
$p=q$ so
$\langle\cdot,\cdot\rangle$ is necessarily indefinite. In the complex setting, $p$ and $q$ must both be even; we emphasize
that we do not assume necessarily that the inner product is positive definite. We refer to \cite{BGN11} for the proof of
Assertion (1) and to \cite{BGM10} for the proof of Assertion (2) in the following result:
\begin{theorem}\label{thm-3}
\ \begin{enumerate}
\item Every (para)-K\"ahler affine curvature model is geometrically realizable by a (para)-K\"ahler affine
manifold.
\item Every (para)-K\"ahler Hermitian curvature model is geometrically realizable by a (para)-K\"ahler Hermitian manifold.
\end{enumerate}\end{theorem}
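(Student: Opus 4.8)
The plan is to establish both assertions by explicit local constructions on a neighbourhood of the origin in $V$, working in the flat coordinate system provided by Equation~(\ref{eqn-7}); in those coordinates $J_\pm$ is the \emph{constant} endomorphism $\partial_{x_i}\mapsto\partial_{y_i}$, $\partial_{y_i}\mapsto\pm\partial_{x_i}$, so the flat connection $\nabla^0$ satisfies $\nabla^0J_\pm=0$. For each assertion one prescribes a polynomial geometric structure whose $4$-jet at the origin is a free parameter carrying precisely the algebraic symmetries of the relevant curvature model and which is automatically (para)-K\"ahler by the form of the ansatz: a torsion free connection with linear Christoffel symbols for Assertion~(1), and a metric realized as a (para)-K\"ahler potential for Assertion~(2). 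In each case one takes $M$ to be the neighbourhood itself, $P$ the origin, and $\Phi=\Id$, so that the only thing to arrange is that the curvature at the origin be $A$.

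For Assertion~(1) I would seek $\nabla$ in the form $\nabla=\nabla^0+S$, with $S$ a $(2,1)$-tensor field vanishing at the origin and linear in the coordinates. Torsion freeness is the symmetry $S(x,y)=S(y,x)$, and since $J_\pm$ is constant the identity $(\nabla_xJ_\pm)y=S(x,J_\pm y)-J_\pm S(x,y)$ shows that $\nabla J_\pm=0$ is equivalent to requiring that the endomorphism $S(x,\cdot)$ commute with $J_\pm$ for every $x$. Since $S$ vanishes at the origin the terms quadratic in $S$ drop out there, and $R^\nabla(x,y)z=(\nabla^0_xS)(y,z)-(\nabla^0_yS)(x,z)$ at the origin; one then chooses the linear $S$ so that this equals $\hat A(x,y)z$. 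The crux is to show that the curvature tensors arising from linear, torsion free $S$ with $S(x,\cdot)$ commuting with $J_\pm$ fill out exactly $\mathfrak{K}_{\pm,\mathfrak{A}}$: that they lie in $\mathfrak{K}_{\pm,\mathfrak{A}}$ is clear, since the curvature of any torsion free connection satisfies (\ref{eqn-1}) and (\ref{eqn-2}) while $\nabla J_\pm=0$ yields (\ref{eqn-8}), and the reverse inclusion follows from a slot-by-slot matching of index symmetries in the $J_\pm$-eigenspace decomposition --- in effect the standard proof of Theorem~\ref{thm-1}(2) carried out $J_\pm$-equivariantly. Feeding the resulting $S$ back in, $(V,J_\pm,\nabla)$ realizes $\mathcal{K}_{\mathcal{A}}$.

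For Assertion~(2) I would seek $g$ as a (para)-K\"ahler potential. In the flat coordinates take a real quartic polynomial $\Psi=\Psi_2+\Psi_4$ with no cubic term: choose $\Psi_2$ so that its (para)-complex Hessian reproduces $\langle\cdot,\cdot\rangle$ at the origin (possible because $J_\pm^*\langle\cdot,\cdot\rangle=\mp\langle\cdot,\cdot\rangle$), observe that the absence of a cubic term forces the first derivatives of the resulting metric $g$ to vanish at the origin, and keep the quartic $\Psi_4$ for the remaining freedom. A metric of Hessian type is automatically (para)-Hermitian, $J_\pm^*g=\mp g$, and (para)-K\"ahler, $\nabla^gJ_\pm=0$: this is the classical fact that a K\"ahler potential produces a K\"ahler metric, valid in arbitrary signature and --- once the notion of a para-K\"ahler potential is in place --- in the para setting as well. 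At the origin, where the first derivatives of $g$ vanish, the curvature of $g$ is given purely by the fourth derivatives of $\Psi$, hence by the coefficients of $\Psi_4$; and the index symmetries carried by $\partial^4\Psi$, namely symmetry among the holomorphic indices, symmetry among the antiholomorphic indices, and the reality relation under conjugation (or its para analogue), match those defining $\mathfrak{K}_{\pm,\mathfrak{R}}$, so $\Psi_4$ can be chosen with curvature $A$ at the origin. Finally, $g$ coincides at the origin with the nondegenerate form $\langle\cdot,\cdot\rangle$, hence is nondegenerate of signature $(p,q)$ on a neighbourhood; restricting there gives the required (para)-K\"ahler Hermitian manifold realizing $\mathcal{K}_{\mathcal{R}}$.

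The step I expect to be the main obstacle is the surjectivity assertion in Assertion~(1). The torsion-free symmetrization requirement and the curvature-matching requirement together determine the relevant part of $S$ only up to the ``gauge'' ambiguity of linear maps with vanishing alternation, and one must verify that this ambiguity can be used to reach the space of those $S$ whose endomorphisms $S(x,\cdot)$ commute with $J_\pm$ --- which turns out to be possible precisely when $A$ obeys~(\ref{eqn-8}). This is representation-theoretic bookkeeping rather than an analytic difficulty. In Assertion~(2) the analytic content is mild, since nondegeneracy near the origin is automatic by continuity; the real work there is the curvature computation for a potential metric together with setting up the para-K\"ahler potential formalism so that it parallels the complex case. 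An alternative, more systematic route in either assertion would be to decompose $\mathfrak{K}_{\pm,\mathfrak{A}}$, respectively $\mathfrak{K}_{\pm,\mathfrak{R}}$, into irreducible summands under the structure group of the (para)-complex geometry and realize each summand by an explicit homogeneous model, but the direct constructions above are the more economical.
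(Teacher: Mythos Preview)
The paper does not prove Theorem~\ref{thm-3}; it is quoted from the literature, with Assertion~(1) attributed to \cite{BGN11} and Assertion~(2) to \cite{BGM10}. So there is no in-paper argument to compare your proposal against directly. That said, the constructions in those references are close in spirit to the method used in Section~\ref{sect-4} of the present paper: one perturbs the flat model by a quadratic term and then checks, via the $\mathcal{U}_\pm^\star$-module decomposition, that the resulting curvature map is surjective onto the target space. Your potential-based plan for Assertion~(2) is a clean variant of this and is correct: the mixed fourth partials of a real (para)-K\"ahler potential carry exactly the index symmetries of $\mathfrak{K}_{\pm,\mathfrak{R}}$, and nondegeneracy near the origin is automatic.

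For Assertion~(1), however, the gap you flag is genuine and not just bookkeeping. The standard averaging that produces a linear $S$ from $A$ (the device behind Theorem~\ref{thm-1}(2)) yields, up to normalization, $C_a(y,z)=\hat A(y,a)z+\hat A(z,a)y$; the K\"ahler identity~(\ref{eqn-8}) says only that each endomorphism $\hat A(x,y)$ commutes with $J_\pm$, so the first summand behaves well under $z\mapsto J_\pm z$ but the second summand $\hat A(J_\pm z,a)y$ need not equal $J_\pm\hat A(z,a)y$. Thus the naive $S$ does \emph{not} satisfy $S(x,J_\pm\cdot)=J_\pm S(x,\cdot)$. You then appeal to the totally symmetric gauge freedom in $S$ to fix this, but you give no argument that this freedom is large enough to reach the $J_\pm$-commuting subspace; that is precisely the nontrivial step, and it requires either an explicit formula for the correction or a representation-theoretic surjectivity argument of the kind carried out in \cite{BGN11}. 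Until that step is supplied, your proposal for Assertion~(1) is a plan with a hole at its center rather than a proof.
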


The (para)-K\"ahler form $\Omega_\pm$ is defined by the identity:
$$\Omega_\pm(x,y):=g(x,J_\pm y)\,.$$
Let $\delta$ be the co-derivative. We refer to \cite{PPS93,V82,V83} for the proof of Assertion (1) in the following result
in the positive definite setting -- the generalization to the indefinite setting is immediate. We refer
to
\cite{KK10} for the proof of Assertion (2) in the Riemannian setting -- the extension to the general
setting is immediate:

\begin{theorem}\label{thm-4}
\ \begin{enumerate}
\item Let $m\ge6$. If $(M,g,J_\pm,\nabla)$ is a 
(para)-K\"ahler  Weyl manifold, then the associated Weyl structure is trivial, i.e. locally
there is a conformally equivalent metric $g_1$ so that
$(M,g_1,J_\pm)$ is K\"ahler and so that $\nabla=\nabla^{g_1}$.
\item Every (para)-Hermitian manifold of dimension $4$ admits a unique 
(para)-K\"ahler Weyl structure defined by taking $\phi=\pm\frac12J_\pm^*\delta\Omega_\pm$.
\end{enumerate}
\end{theorem}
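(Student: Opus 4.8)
The two assertions share a common starting point that I would exploit first. Any torsion-free connection $\nabla$ with $\nabla g=-2\phi\otimes g$ differs from the Levi-Civita connection by the symmetric tensor $C_\phi(X,Y):=\phi(X)Y+\phi(Y)X-g(X,Y)\phi^\sharp$, this being the unique choice forced by Equation~(\ref{eqn-4}). Substituting $\nabla=\nabla^g+C_\phi$ into $\nabla J_\pm=0$ and using $\nabla^g g=0$ shows that $(M,g,J_\pm,\nabla)$ is (para)-K\"ahler Weyl precisely when the pointwise identity $(\nabla^g_X J_\pm)Y=L_\phi(X)Y$ holds, where $L_\phi$ is a universal $3$-tensor depending linearly on $\phi$: a fixed combination of the four terms $\phi(Y)J_\pm X$, $\phi(J_\pm Y)X$, $g(X,Y)J_\pm\phi^\sharp$ and $\Omega_\pm(X,Y)\phi^\sharp$. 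Since $(\nabla^g_X\Omega_\pm)(Y,Z)=g((\nabla^g_X J_\pm)Y,Z)$, this says that $\nabla^g\Omega_\pm$ lies in the image of the equivariant linear map $\phi\mapsto L_\phi$, which one recognises as the ``Lee-form'' summand of the (para-)analogue of the Gray--Hervella decomposition of $\nabla^g\Omega_\pm$. The first thing I would extract is a trace identity: contracting $(\nabla^g_X J_\pm)Y=L_\phi(X)Y$ in the pair $(X,Y)$ against $g$ annihilates every term but the one in $J_\pm\phi^\sharp$ and, after using $J_\pm^2=\pm\Id$ and $\operatorname{Tr}(J_\pm)=0$, produces $\phi$ as a fixed nonzero multiple of $J_\pm^*\delta\Omega_\pm$ (the multiple has absolute value $\tfrac1{m-2}$, hence equals $\pm\tfrac12$ when $m=4$). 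This already proves \emph{uniqueness} in both parts and singles out $\phi=\pm\tfrac12 J_\pm^*\delta\Omega_\pm$ as the only candidate defining form in assertion (2).

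For assertion (1) I would argue directly, avoiding the Gray--Hervella machinery. From $\nabla J_\pm=0$ and Equation~(\ref{eqn-4}) one gets $\nabla\Omega_\pm=-2\phi\otimes\Omega_\pm$, and since $\nabla$ is torsion free the exterior derivative is the alternation of the covariant derivative, so $d\Omega_\pm=-2\,\phi\wedge\Omega_\pm$. Applying $d$ once more and using $\phi\wedge\phi=0$ gives $d\phi\wedge\Omega_\pm=0$. As $\Omega_\pm$ is a non-degenerate $2$-form on a vector space of dimension $m=2\bar m\ge6$, the Lefschetz operator $\Omega_\pm\wedge(\cdot)\colon\Lambda^2\to\Lambda^4$ is injective -- pure symplectic linear algebra, valid equally in the complex and para-complex settings -- hence $d\phi=0$. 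On a contractible neighbourhood one then has $\phi=df$, and Theorem~\ref{thm-2} gives $\nabla=\nabla^{g_1}$ for the conformally equivalent metric $g_1=e^{2f}g$; since $\nabla^{g_1}J_\pm=\nabla J_\pm=0$ and $J_\pm^*g_1=\mp g_1$, the metric $g_1$ is (para)-K\"ahler and the Weyl structure is trivial. I anticipate no real difficulty here once the Lefschetz fact is invoked.

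Assertion (2) carries the actual content: one must show that when $m=4$ every (para)-Hermitian structure has $\nabla^g\Omega_\pm$ in the Lee-form subspace, i.e.\ in the image of $\phi\mapsto L_\phi$. The plan is: (a) use integrability of $J_\pm$ to see that the $W_1\oplus W_2$ components of $\nabla^g\Omega_\pm$ vanish, so $\nabla^g\Omega_\pm\in W_3\oplus W_4$; (b) invoke the dimension count by which $W_3$ vanishes in (para-)complex dimension $2$ (it is nontrivial only for (para-)complex dimension $\ge3$), so that $\nabla^g\Omega_\pm\in W_4$, precisely the image of $\phi\mapsto L_\phi$; (c) write $\nabla^g\Omega_\pm=L_\psi$ for the unique such $\psi$, which by the trace identity above equals $\pm\tfrac12 J_\pm^*\delta\Omega_\pm$; (d) set $\nabla:=\nabla^g+C_\psi$, which is torsion free and, by construction, satisfies $\nabla g=-2\psi\otimes g$ and $\nabla J_\pm=0$, thereby producing the required (para)-K\"ahler Weyl structure. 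The $1$-form $\psi$ need not be closed, consistently with the breakdown of the Lefschetz argument when $\bar m=2$, so the structure is in general non-trivial.

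The main obstacle is steps (b) and (c). If one prefers not to quote the (para-)Gray--Hervella dimension table, the same conclusion can be obtained by a direct tensorial computation: put $E(X,Y):=(\nabla^g_X J_\pm)Y-L_\psi(X)Y$ with $\psi=\pm\tfrac12 J_\pm^*\delta\Omega_\pm$; then by the choice of $\psi$ the relevant traces of $E$ vanish, $E$ anti-commutes with $J_\pm$ and is $g$-skew-symmetric in its last two arguments, and one checks -- using the vanishing of the Nijenhuis tensor of $J_\pm$ (integrability) together with $\dim M=4$ -- that these constraints force $E\equiv 0$. Either way, once $E\equiv 0$ is established the remaining assertions of (2) are immediate from the first paragraph.
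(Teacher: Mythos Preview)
Your argument is correct. Note, however, that the paper does not actually prove Theorem~\ref{thm-4} in detail: it cites \cite{PPS93,V82,V83} for Assertion~(1) and \cite{KK10} for Assertion~(2), and then remarks (after Theorem~\ref{thm-6}) that Assertion~(1) can alternatively be recovered from the algebraic identity $\mathfrak{K}_{\pm,\mathfrak{W}}=\mathfrak{K}_{\pm,\mathfrak{R}}$ for $m\ge6$ together with Theorem~\ref{thm-2}. Your route to~(1) is genuinely different from that alternative: instead of invoking the curvature decomposition of Theorem~\ref{thm-6}, you use $\nabla\Omega_\pm=-2\phi\otimes\Omega_\pm$ to obtain $d\phi\wedge\Omega_\pm=0$ and then the injectivity of the Lefschetz map $\Lambda^2\to\Lambda^4$ for $\bar m\ge3$. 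This is closer in spirit to the Vaisman references the paper cites, and it is more elementary in that it sidesteps the representation-theoretic machinery of Sections~\ref{sect-2}--\ref{sect-3}; on the other hand, the paper's approach exhibits Assertion~(1) as a geometric corollary of its main algebraic theorem, which is the point it wishes to make. For Assertion~(2), your Gray--Hervella argument (integrability kills $W_1\oplus W_2$; the dimension count kills $W_3$ when $\bar m=2$; hence $\nabla^g\Omega_\pm\in W_4$) is the standard one and matches what one finds in the cited literature. One minor slip: $(\nabla^g_X\Omega_\pm)(Y,Z)=g(Y,(\nabla^g_X J_\pm)Z)=-g((\nabla^g_X J_\pm)Y,Z)$, so your displayed identity has a sign error, but this does not affect the argument.
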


The following
theorem is the first main result of this paper:
\begin{theorem}\label{thm-5}
Every (para)-K\"ahler Weyl curvature model is geometrically realizable by a (para)-K\"ahler Weyl
manifold.
\end{theorem}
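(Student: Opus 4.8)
The plan is to combine the algebraic classification of $\mathfrak{K}_{\pm,\mathfrak{W}}(V)$ provided by Theorem~\ref{thm-6} with the realizability results of Theorems~\ref{thm-1} and \ref{thm-3}, treating the ranges $m\ge6$ and $m=4$ separately, since Theorem~\ref{thm-6} shows that the discrepancy between $\mathfrak{K}_{\pm,\mathfrak{W}}$ and $\mathfrak{K}_{\pm,\mathfrak{R}}$ is concentrated entirely in dimension $4$. Thus let $\mathcal{K}_{\mathcal{W}}=(V,\langle\cdot,\cdot\rangle,J_\pm,A)$ be a (para)-K\"ahler Weyl curvature model.

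I would first dispose of the case $m\ge6$. By Theorem~\ref{thm-6}, in this range $\mathfrak{K}_{\pm,\mathfrak{W}}(V)=\mathfrak{K}_{\pm,\mathfrak{R}}(V)$, so $A\in\mathfrak{K}_{\pm,\mathfrak{R}}$ and $\mathcal{K}_{\mathcal{W}}$ is in fact a (para)-K\"ahler Hermitian curvature model. Theorem~\ref{thm-3}(2) then produces a (para)-K\"ahler Hermitian manifold $(M,g,J_\pm)$, a point $P$, and an isomorphism $\Phi\colon V\to T_PM$ with $\Phi^*g_P=\langle\cdot,\cdot\rangle$, $\Phi^*(J_\pm)_P=J_\pm$, and $\Phi^*R^g_P=A$. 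Setting $\nabla:=\nabla^g$ and $\phi:=0$ exhibits $(M,g,J_\pm,\nabla)$ as a (para)-K\"ahler Weyl manifold: Equation~(\ref{eqn-4}) holds trivially, and $\nabla J_\pm=\nabla^gJ_\pm=0$ because the manifold is (para)-K\"ahler Hermitian. Since $R^\nabla=R^g$, this realizes $\mathcal{K}_{\mathcal{W}}$, and this case is complete.

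The substantive case is $m=4$. Here I would use the decomposition $\mathfrak{K}_{\pm,\mathfrak{W}}(V)=\mathfrak{K}_{\pm,\mathfrak{R}}(V)\oplus\mathfrak{W}_0$ of Theorem~\ref{thm-6}, writing $A=A_{\mathfrak{R}}+A_0$ with $A_{\mathfrak{R}}\in\mathfrak{K}_{\pm,\mathfrak{R}}$; here $A_0$ is faithfully recorded by the alternating Ricci tensor $\rho_{a,A}$, since $\mathfrak{K}_{\pm,\mathfrak{W}}\cap\{\rho_a=0\}=\mathfrak{K}_{\pm,\mathfrak{R}}$ in view of Equation~(\ref{eqn-5}), and the (para)-K\"ahler symmetry~(\ref{eqn-8}) constrains $\rho_{a,A}$ to a definite subspace of $\Lambda^2V^*$. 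The tempting shortcut — realize $A_{\mathfrak{R}}$ by a (para)-K\"ahler Hermitian manifold via Theorem~\ref{thm-3}(2) and then add a $1$-form $\phi$ with $\phi(P)=0$ and $(d\phi)_P=-\tfrac14\rho_{a,A}$, exactly as in the proof of Theorem~\ref{thm-1}(3) — does not work here: a direct computation with $\nabla_XY=\nabla^g_XY+\phi(X)Y+\phi(Y)X-g(X,Y)\phi^{\#}$ shows that the Weyl deformation of a (para)-K\"ahler connection fails to preserve $J_\pm$ unless $\phi$ is forced to be trivial. Instead one must produce $A_0$ through a genuine non-K\"ahler (para)-Hermitian deformation. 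Concretely, I would work on a neighborhood $\mathcal{O}$ of the origin in $V$, take $J_\pm$ to be the given constant (hence integrable) structure, and choose a Hermitian metric $g$ on $\mathcal{O}$ (that is, $J_\pm^*g=\mp g$) with $g(0)=\langle\cdot,\cdot\rangle$ whose $2$-jet at the origin is prescribed so that, on the one hand, $R^g_0$ supplies the Riemannian part $A_{\mathfrak{R}}$ — this is the content of the construction underlying Theorem~\ref{thm-3}(2), localized — and, on the other hand, the (para)-K\"ahler form $\Omega_\pm$ is made non-parallel in a controlled way so that the canonical Weyl connection $\nabla$ of Theorem~\ref{thm-4}(2), with $\phi=\pm\tfrac12J_\pm^*\delta\Omega_\pm$, has $d\phi$ reproducing $-\tfrac14\rho_{a,A}$ at the origin. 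By Theorem~\ref{thm-4}(2) the resulting $(\mathcal{O},g,J_\pm,\nabla)$ is automatically a (para)-K\"ahler Weyl manifold, so with $\Phi=\Id$ it remains only to verify that $R^\nabla_0=A_{\mathfrak{R}}+A_0=A$.

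The step I expect to be the main obstacle is precisely that last verification. The curvature $R^\nabla_0$ of the canonical connection is a universal quadratic polynomial in the $2$-jet of the Hermitian structure $(g,J_\pm)$ at the origin; expanding $R^\nabla-R^g$ in $\phi$ and $\nabla^g\phi$ and using $\phi=\pm\tfrac12J_\pm^*\delta\Omega_\pm$ turns the problem into a finite-dimensional linear-algebra computation. The delicate point is that $\phi$ is not a free $1$-form — it is slaved to $\delta\Omega_\pm$ — so one must parametrize the admissible non-K\"ahler Hermitian $2$-jets (with $g(0)$, $(J_\pm)_0$, and the Riemannian projection $A_{\mathfrak{R}}$ all fixed) and check that the induced map onto the fibre $\{A'\in\mathfrak{K}_{\pm,\mathfrak{W}}:A'_{\mathfrak{R}}=A_{\mathfrak{R}}\}$ is onto; this is exactly where the explicit description of $\mathfrak{W}_0$ from Theorem~\ref{thm-6} is needed, to match the dimension count. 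Everything else — integrability of the constant $J_\pm$, the Hermitian identity $J_\pm^*g=\mp g$, torsion-freeness, and the curvature identities~(\ref{eqn-1}), (\ref{eqn-2}), (\ref{eqn-5}), and (\ref{eqn-8}) — is either built into the construction or inherited from Theorem~\ref{thm-4}(2).
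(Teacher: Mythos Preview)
Your approach is essentially the paper's: for $m\ge6$ reduce to Theorem~\ref{thm-3}(2) via Theorem~\ref{thm-6}, and for $m=4$ invoke the canonical (para)-K\"ahler--Weyl connection of Theorem~\ref{thm-4}(2) on a Hermitian metric near the origin and show that the resulting curvature map onto $\mathfrak{K}_{\pm,\mathfrak{W}}$ is surjective. The only difference is in execution of that surjectivity step: rather than splitting $A=A_{\mathfrak{R}}+A_0$ and matching the two pieces separately (which is awkward, since for non-K\"ahler Hermitian $g$ the tensor $R^g_0$ need not lie in $\mathfrak{K}_{\pm,\mathfrak{R}}$ and the Weyl correction $R^\nabla_0-R^g_0$ is not purely governed by $d\phi$), the paper writes $g=\varepsilon+\theta_{ijkl}x^kx^l\,dx^i\circ dx^j$ with $\theta\in S^2_\mp\otimes S^2$, observes that $\Theta:\theta\mapsto R^\nabla(0)$ is \emph{linear} and $\mathcal{U}_\pm^\star$-equivariant, and then checks surjectivity irreducible-module-by-module with one explicit example per summand---thereby dissolving exactly the coupling you flag as the main obstacle.
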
   

Curvature decompositions play a central role in modern differential geometry. The following theorem is the second main
result of this paper and will play a central role in the proof of Theorem~\ref{thm-5}:

\begin{theorem}\label{thm-6}
Let $(V,\langle\cdot,\cdot\rangle,J_\pm)$ be a (para)-Hermitian vector
space.
\begin{enumerate}
\item If $m\ge6$, then
$\mathfrak{K}_{\pm,\mathfrak{W}}=\mathfrak{K}_{\pm,\mathfrak{R}}$.
\item If $m=4$, then
$\mathfrak{K}_{\pm,\mathfrak{W}}=\mathfrak{K}_{\pm,\mathfrak{R}}
   \oplus L^2_{0,\mp}$ where 
$$\rho_a:L^2_{0,\mp}\mapright{\approx}\Lambda^2_{0,\mp}:=
\{\Phi\in\Lambda^2(V^*):\Phi\perp\Omega_\pm\quad\text{and}\quad J_\pm^*\Phi=\mp\Phi\}\,.$$
\end{enumerate}
\end{theorem}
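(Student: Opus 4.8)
The plan is to run everything through the alternating Ricci contraction. Since $\mathfrak{R}(V)\subset\mathfrak{W}(V)$ we have $\mathfrak{K}_{\pm,\mathfrak{R}}\subset\mathfrak{K}_{\pm,\mathfrak{W}}$. Contracting Equation~(\ref{eqn-5}) over the last two indices shows $\rho_a(x,y)=-\frac12\sum_i\epsilon_i A(x,y,e_i,e_i)$ for $A\in\mathfrak{W}(V)$ (here $\epsilon_i=\langle e_i,e_i\rangle$ for an orthonormal frame $\{e_i\}$); thus $\rho_a$ is a linear map $\mathfrak{W}(V)\to\Lambda^2(V^*)$. It is surjective -- a right inverse is the classical construction attaching to $\Phi\in\Lambda^2(V^*)$ a Weyl tensor built out of $\Phi$ and $\langle\cdot,\cdot\rangle$ -- and its kernel is exactly $\mathfrak{R}(V)$: if $\rho_a=0$ then~(\ref{eqn-5}) makes $A$ alternating in the last two slots, and (\ref{eqn-1}), (\ref{eqn-2}) together with this alternation force~(\ref{eqn-3}). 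Restricting $\rho_a$ to $\mathfrak{K}_{\pm,\mathfrak{W}}$, the kernel becomes $\mathfrak{K}_{\pm,\mathfrak{R}}$; so Theorem~\ref{thm-6} reduces to computing the image $\rho_a(\mathfrak{K}_{\pm,\mathfrak{W}})\subseteq\Lambda^2(V^*)$ and, when it is nonzero, to exhibiting a complement of $\mathfrak{K}_{\pm,\mathfrak{R}}$ in $\mathfrak{K}_{\pm,\mathfrak{W}}$ on which $\rho_a$ restricts to an isomorphism onto that image.

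First I would show $\rho_a(\mathfrak{K}_{\pm,\mathfrak{W}})\subseteq\Lambda^2_{0,\mp}$ in every dimension. Fix $A\in\mathfrak{K}_{\pm,\mathfrak{W}}$. Equation~(\ref{eqn-8}) is equivalent to the polarized identity $A(x,y,J_\pm z,w)=-A(x,y,z,J_\pm w)$; combining it with the first Bianchi identity~(\ref{eqn-2}) one produces the ``$(1,2)$-companion'' of~(\ref{eqn-8}): an expression for $A(J_\pm x,J_\pm y,z,w)$ in terms of $A(x,y,z,w)$ plus terms of the shape $\rho_a\otimes\Omega_\pm$. Tracing this in the last two arguments gives $J_\pm^*\rho_a=\mp\rho_a$; contracting instead against $\Omega_\pm$ -- together with an auxiliary contraction of~(\ref{eqn-2}) and~(\ref{eqn-8}) relating $\sum_i\epsilon_i A(x,y,e_i,J_\pm e_i)$ to $\rho(x,J_\pm y)-\rho(y,J_\pm x)$, and using~(\ref{eqn-5}) to eliminate residual $J_\pm$-pairs -- yields $\langle\rho_a,\Omega_\pm\rangle=0$. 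Hence $\rho_a\in\Lambda^2_{0,\mp}$.

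The dimension dichotomy is the crux. Iterating the transport of $J_\pm$ around the four arguments of $A$ -- alternately applying~(\ref{eqn-2}) and~(\ref{eqn-8}) and feeding the surviving $J_\pm$-pairs through~(\ref{eqn-5}) -- and then tracing, one arrives at an identity in which $\rho_a$ is multiplied by a dimension-dependent scalar (expected to be a nonzero multiple of $m-4$) that vanishes precisely when $m=4$. For $m\ge6$ this forces $\rho_a=0$, so $A\in\ker\rho_a\cap\mathfrak{K}_{\pm,\mathfrak{W}}=\mathfrak{K}_{\pm,\mathfrak{R}}$ and $\mathfrak{K}_{\pm,\mathfrak{W}}=\mathfrak{K}_{\pm,\mathfrak{R}}$; this is~(1). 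For $m=4$ the obstruction disappears, and one proves surjectivity of $\rho_a:\mathfrak{K}_{\pm,\mathfrak{W}}\to\Lambda^2_{0,\mp}$ by constructing an explicit right inverse: to $\Phi\in\Lambda^2_{0,\mp}$ attach the $4$-tensor $A_\Phi$ gotten by $J_\pm$-twisting the classical ``$2$-form to Weyl tensor'' formula, i.e. assembled from $\Phi$, its symmetric $J_\pm$-associate $\widetilde\Phi(x,y):=\Phi(x,J_\pm y)$, $\langle\cdot,\cdot\rangle$ and $\Omega_\pm$; one verifies (\ref{eqn-1}), (\ref{eqn-2}) and (\ref{eqn-8}) -- here $m=4$ is used exactly where the obstruction above would otherwise sit -- and that $\rho_a(A_\Phi)=\Phi$. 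Setting $L^2_{0,\mp}:=\{A_\Phi:\Phi\in\Lambda^2_{0,\mp}\}$ then gives $\mathfrak{K}_{\pm,\mathfrak{W}}=\mathfrak{K}_{\pm,\mathfrak{R}}\oplus L^2_{0,\mp}$ and $\rho_a:L^2_{0,\mp}\mapright{\approx}\Lambda^2_{0,\mp}$, which is~(2). (Alternatively, since $\Lambda^2_{0,\mp}$ is irreducible under the (para-)unitary group and $\rho_a$ is equivariant, surjectivity in dimension four follows from producing a single $A\in\mathfrak{K}_{\pm,\mathfrak{W}}$ with $\rho_a\ne0$; Theorem~\ref{thm-4}(2) provides such examples geometrically.)

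I expect the main obstacle to be the dimension bookkeeping of the third paragraph -- organizing the iterated use of~(\ref{eqn-2}), (\ref{eqn-5}) and~(\ref{eqn-8}) so that the coefficient vanishing at $m=4$ is isolated cleanly -- and, in the four-dimensional case, pinning down the correct closed form of $A_\Phi$: there is a small family of natural $4$-tensors built from $\Phi$, $\widetilde\Phi$, $\langle\cdot,\cdot\rangle$ and $\Omega_\pm$, and one must single out the combination satisfying both the Bianchi identity~(\ref{eqn-2}) and the K\"ahler identity~(\ref{eqn-8}). The trace computations of the second paragraph are, by contrast, routine once the $(1,2)$-companion of~(\ref{eqn-8}) is available.
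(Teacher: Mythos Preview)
Your strategy is genuinely different from the paper's. The paper does not attempt a direct trace calculus on the identities~(\ref{eqn-2}),~(\ref{eqn-5}),~(\ref{eqn-8}); instead it invokes the Tricerri--Vanhecke decomposition (Theorem~\ref{thm-8}), writes $\mathfrak{W}=\mathfrak{R}\oplus\mathfrak{P}$ with $\mathfrak{P}\approx\Lambda^2=\chi\oplus\Lambda^2_{0,\mp}\oplus\Lambda^2_\pm$ as $\mathcal{U}_\pm^\star$ modules, and then tests each irreducible summand against the K\"ahler identity~(\ref{eqn-8}) by evaluating explicit tensors---built from the maps $\Xi$ of Equation~(\ref{eqn-10}) and $\Psi$ of Equation~(\ref{eqn-12})---on well-chosen basis vectors. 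The dimension dichotomy appears not through a numerical factor $m-4$ but because for $m\ge6$ one may plug in $e_5,e_6$ and read off a violation of~(\ref{eqn-8}), whereas for $m=4$ those slots are unavailable and one verifies~(\ref{eqn-8}) term by term.

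Your reduction via $\rho_a$ (kernel $=\mathfrak{R}$, hence everything is governed by the image) is a correct starting point, but your paragraph~2 contains a genuine gap. You claim that for every $A\in\mathfrak{K}_{\pm,\mathfrak{W}}$ one has $J_\pm^*\rho_a=\mp\rho_a$, obtained by tracing a ``$(1,2)$-companion'' of~(\ref{eqn-8}). The paper's own computation in Section~\ref{sect-3} (subsection on $\Lambda^2_\pm$ for $m=4$) contradicts this: it exhibits, for $m=4$, elements $3\Xi(\psi)+\Psi(\psi)\in\mathfrak{K}_{\pm,\mathfrak{W}}$ with $\psi\in\Lambda^2_\pm$; since $\Psi(\psi)\in\mathfrak{R}$ and $\rho_a\circ\Xi=-m\cdot\Id$, one gets $\rho_a\bigl(3\Xi(\psi)+\Psi(\psi)\bigr)=-12\psi\in\Lambda^2_\pm$, i.e.\ $J_\pm^*\rho_a=\pm\rho_a$, the opposite sign. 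The point is that~(\ref{eqn-8}) constrains only the last two arguments of $A$, and Bianchi does not transport this cleanly to the first two: the ``correction terms'' you anticipate are not of the simple shape $\rho_a\otimes\Omega_\pm$ but involve $A$ itself, so no closed trace identity of the form you describe is available. Consequently the image $\rho_a(\mathfrak{K}_{\pm,\mathfrak{W}})$ in dimension~$4$ meets both $\Lambda^2_{0,\mp}$ and $\Lambda^2_\pm$ (this is exactly why Section~\ref{sect-4} lists both among the target modules), and your program as written would miss the $\Lambda^2_\pm$ summand. To repair this you would at minimum have to run your trace argument separately on the $\Lambda^2_\pm$ and $\Lambda^2_{0,\mp}$ isotypic pieces, which is essentially what the paper does via representation theory.
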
 

Theorem~\ref{thm-6} is one of the facts about $4$-dimensional geometry that distinguishes it
from the higher dimensional setting; the module $L^2_{0,\mp}$ provides
additional curvature possibilities if
$m=4$. 

Curvature decompositions are fundamental in establishing geometrical realizability results. For example, we can use
Theorem~\ref{thm-6}~(1) to establish Theorem~\ref{thm-4}~(1) as follows. Suppose that $(M,g,J_\pm,\nabla)$ is a
(para)-K\"ahler Weyl manifold of dimension
$m\ge6$. By Theorem~\ref{thm-6}, $R^\nabla\in\mathfrak{K}_{\pm,\mathfrak{R}}\subset\mathfrak{R}$. By Theorem~\ref{thm-2},
there is a locally conformally equivalent metric $g_1$ so that
$\nabla=\nabla^{g_1}$; $g_1$ is globally defined if $H^1(M;\mathbb{R})=0$.

Here is a brief outline to the remainder of this paper. In Section~\ref{sect-2}, we review well known previous results
concerning curvature decompositions that we shall need. Theorem~\ref{thm-6} is established in Section~\ref{sect-3} and
Theorem~\ref{thm-5} is established in Section~\ref{sect-4}.

\section{Curvature decompositions}\label{sect-2}

In Section~\ref{sect-2.1}, the
structure groups $\mathcal{O}$, $\mathcal{U}_\pm$, and $\mathcal{U}_\pm^\star$ will be defined and the
fundamental facts needed from representation theory will be established. In Section~\ref{sect-2.2}, results
of Singer and Thorpe
\cite{ST69} giving the decomposition of
$\mathfrak{R}$ and results of Higa
\cite{H93,H94} giving the decomposition of $\mathfrak{W}$ as an $\mathcal{O}$-module will be presented.
In Section~\ref{sect-2.3} the Tricerri--Vanhecke decomposition \cite{TV81} of the space of Riemannian algebraic
curvature tensors
$\mathfrak{R}$ and the space of K\"ahler algebraic curvature tensors $\mathfrak{K}_{\pm,\mathfrak{R}}$
as $\mathcal{U}_\pm^\star$ modules will be outlined; this will rise to the decomposition of the space of Weyl algebraic
curvature tensors
$\mathfrak{W}$ as a
$\mathcal{U}_\pm^\star$ module. As we shall not need the decomposition of $\mathfrak{K}_{\pm,\mathfrak{A}}$ as a
$\mathcal{U}_\pm^\star$ module, we shall omit this decomposition and instead refer to the discussion in \cite{BGN11a}.

\subsection{Representation theory}\label{sect-2.1}
Let $(V,\langle\cdot,\cdot\rangle)$ be an inner product space. The orthogonal group $\mathcal{O}$ is the subgroup of
all invertible linear transformations of $V$ preserving the inner product.
If $(V,\langle\cdot,\cdot\rangle,J_\pm)$ is a (para)-Hermitian vector space, define:
\begin{eqnarray*}
&&\mathcal{U}_\pm:=\{T\in\mathcal{O}:TJ_\pm=J_\pm T\},\\
&&\mathcal{U}_\pm^\star:=\{T\in\mathcal{O}:TJ_\pm=J_\pm T\text{ or }
     TJ_\pm=-J_\pm T\}\,.
\end{eqnarray*}
It is convenient to work with the $\mathbb{Z}_2$ extensions
$\mathcal{U}_\pm^\star$ as we may then interchange the roles of
$J_\pm$ and $-J_\pm$. Let $\chi$ be the $\mathbb{Z}_2$ valued character of
$\mathcal{U}_\pm^\star$ so that:
$$
J_\pm T=\chi(T)TJ_\pm\quad\text{and}\quad T^*\Omega_\pm=\chi(T)\Omega_\pm
\quad\text{for}\quad T\in\mathcal{U}_\pm^\star\,.
$$
By an abuse of notation, we identify $\chi$ with the associated 1-dimensional module.
We can extend $\langle\cdot,\cdot\rangle$ to a natural non-degenerate inner product on $\otimes^kV$ and $\otimes^kV^*$. 
The following observation is fundamental in the subject:
\begin{lemma}\label{lem-1}
Let
$G\in\{\mathcal{O},\mathcal{U}_-,\mathcal{U}_-^\star,\mathcal{U}_+^\star\}$ and let
$\xi$ be a
$G$-submodule of
$\otimes^kV^*$. Then the restriction of the inner product on $\otimes^kV^*$ to $\xi$ is non-degenerate.
\end{lemma}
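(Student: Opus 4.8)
The plan is to build, out of the given form, an auxiliary \emph{positive-definite} inner product on $V$ that agrees with $\langle\cdot,\cdot\rangle$ up to composition with an element of $G$; the radical of the restricted form will then be forced to be null for this positive-definite form and hence to vanish. So the statement reduces to the observation that each group $G$ on the list is large enough to contain a suitable ``sign change''.

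First I would fix an orthonormal basis $\{e_i\}$ of $(V,\langle\cdot,\cdot\rangle)$, with $\epsilon_i:=\langle e_i,e_i\rangle=\pm1$, chosen adapted to $J_\pm$ in the (para-)unitary cases. In the complex case one takes a basis of the form $\{e_a,J_-e_a\}_{1\le a\le\bar m}$: each $J_-$-invariant plane $\mathrm{span}\{v,J_-v\}$ with $\langle v,v\rangle\neq0$ carries a definite form (here $\langle v,J_-v\rangle=\Omega_-(v,v)=0$), so $V$ splits orthogonally into such planes. In the para-complex case one takes $e_a^{\pm}:=\tfrac1{\sqrt2}(u_a\pm w_a)$, where $\{u_a\}$ and $\{w_a\}$ are dual bases of the totally isotropic $\pm1$-eigenspaces $V_\pm$ of $J_+$. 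Let $J_0$ be the involution with $J_0 e_i=\epsilon_i e_i$, i.e.\ $J_0=-\Id$ on the negative-definite part of $V$ and $+\Id$ on the positive-definite part, and set $\langle x,y\rangle_0:=\langle J_0 x,y\rangle$. Then $J_0\in\mathcal O$ (changing the signs of exactly the negative directions preserves $\langle\cdot,\cdot\rangle$), $J_0$ is self-adjoint for $\langle\cdot,\cdot\rangle$ with $J_0^2=\Id$, and consequently $\langle\cdot,\cdot\rangle_0$ is symmetric and positive definite ($\{e_i\}$ is orthonormal for it).

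The key step, and the one I expect to require real care, is to verify that $J_0\in G$ for each $G\in\{\mathcal O,\mathcal U_-,\mathcal U_-^\star,\mathcal U_+^\star\}$. For the adapted bases above a short computation shows $J_0 J_-=J_- J_0$, so $J_0\in\mathcal U_-\subset\mathcal U_-^\star$, while $J_0 J_+=-J_+ J_0$, so $J_0\in\mathcal U_+^\star$; this last sign is exactly why the list contains $\mathcal U_+^\star$ and not $\mathcal U_+$. Granting this, extend $\langle\cdot,\cdot\rangle$ and $\langle\cdot,\cdot\rangle_0$ to $\otimes^k V^*$. Evaluating on decomposable tensors gives $\langle\cdot,\cdot\rangle_0^{\otimes k}=\langle J_0\,\cdot\,,\,\cdot\,\rangle^{\otimes k}$, where $J_0$ now denotes the induced (diagonal) action of $J_0\in G$ on $\otimes^k V^*$, and a $k$-fold tensor power of a positive-definite inner product is again positive definite. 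Since $\xi$ is a $G$-submodule and $J_0\in G$, we have $J_0\xi\subseteq\xi$.

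To finish: if $v\in\xi$ lies in the radical of $\langle\cdot,\cdot\rangle^{\otimes k}|_\xi$, then, because $J_0 v\in\xi$, we get $0=\langle v,J_0 v\rangle^{\otimes k}=\langle v,v\rangle_0^{\otimes k}$, and positive-definiteness of $\langle\cdot,\cdot\rangle_0^{\otimes k}$ forces $v=0$. Hence the radical is trivial and the restriction is non-degenerate. The only nontrivial work is the bookkeeping in the previous paragraph --- choosing a $J_\pm$-adapted orthonormal basis and correctly tracking whether $J_0$ commutes or anticommutes with $J_\pm$; everything else is formal.
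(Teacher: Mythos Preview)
Your proof is correct and is essentially the paper's own argument. The paper's element $T$ defined by $Te_i=\langle e_i,e_i\rangle e_i$ is precisely your $J_0$; the paper phrases the conclusion via the $\pm1$ eigenspace decomposition of $T$ on $\otimes^kV^*$ (the $+1$ eigenspace is spacelike, the $-1$ eigenspace timelike), while you package the same content as positive-definiteness of $\langle J_0\cdot,\cdot\rangle$, but the substance and the case-by-case check that $J_0\in G$ (commuting with $J_-$, anticommuting with $J_+$) are identical.
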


\begin{proof} Let $\{e_i\}$ be an orthonormal basis for $V$ and let $\{e^i\}$ be the associated dual basis for $V^*$. If
$I=(i_1,...,i_k)$ is a multi-index, set $e^I=e^{i_1}\otimes...\otimes e^{i_k}$. Then:
\begin{equation}\label{eqn-9}
(e^I,e^J):=\langle e^{i_1},e^{j_1}\rangle\cdot\cdot\cdot
\langle e^{i_k},e^{j_k}\rangle=\left\{\begin{array}{rl}
0&\text{ if }I\ne J\\
\pm1&\text{ if }I=J\end{array}\right\}\,.
\end{equation}

Let $Te_i=\langle e_i,e_i\rangle\cdot e_i$ define an element $T\in\mathcal{O}$. Suppose that $\xi$ is an $\mathcal{O}$
invariant subspace of $\otimes^kV^*$. Decompose $\xi=\xi_+\oplus\xi_-$ and
decompose $\otimes^kV^*=W_+\oplus W_-$ into the $\pm1$ eigenspaces of $T$. Since $T\in\mathcal{O}$, these decompositions are
orthogonal direct sums. By Equation~(\ref{eqn-9}), $W_+$ is spacelike and $W_-$ is timelike. Since
$\xi_\pm\subset W_\pm$, $\xi_+$ is spacelike and $\xi_-$ is timelike; the Lemma now follows in this
special case. If $G=\mathcal{U}_-$ or if
$G=\mathcal{U}_-^\star$, then we can choose the orthonormal basis so that 
$$J_-e_{2\nu-1}=e_{2\nu}\quad\text{and}\quad
  J_-e_{2\nu}=-e_{2\nu-1}\,.$$Since $J_-^*\langle\cdot,\cdot\rangle=\langle\cdot,\cdot\rangle$,
$J_-T=TJ_-$. Thus
$T\in G$ and the same argument pertains. Finally suppose $G=\mathcal{U}_+^\star$. We can choose the basis so
$$J_+e_{2\nu-1}=e_{2\nu}\quad\text{and}\quad  J_+e_{2\nu}=e_{2\nu-1}$$
where $e_{2\nu-1}$ is spacelike and $e_{2\nu}$ is
timelike. We now have
$T\in\mathcal{U}_+^\star-\mathcal{U}_+$.\end{proof}

We note that Lemma~\ref{lem-1} fails for the group $G=\mathcal{U}_+$. For example, let $V_\pm$ be the $\pm1$ eigenspaces
of
$J_+$; then $J_\pm V_\pm=V_\pm$ and $V_\pm$ is totally isotropic.
We can combine Lemma~\ref{lem-1} with same arguments as used in the positive definite setting to
establish the following result; we omit details in the interests of brevity:

\begin{lemma}\label{lem-2}
Let
$G\in\{\mathcal{O},\mathcal{U}_-,\mathcal{U}_-^\star,\mathcal{U}_+^\star\}$ and let
$\xi$ be a $G$-submodule of $\otimes^kV^*$.
\begin{enumerate}
\item There is an
orthogonal direct sum decomposition of
$\xi=\xi_1\oplus...\oplus\xi_k$ into irreducible $G$-submodules of $\xi$.
The multiplicity with which a given irreducible $G$-module $\eta$ appears in $\xi$ is
independent of the particular decomposition which is chosen. If $\xi_1$ appears with multiplicity
$1$ in the decomposition of
$\xi$ and if
$\eta$ is any $G$-submodule of $\xi$, then either $\xi_1\subset\eta$ or
$\xi_1\perp\eta$.
\item If $\xi_1\rightarrow\xi\rightarrow\xi_2$ is a short exact sequence of $G$-modules, then
$\xi$ is isomorphic to $\xi_1\oplus\xi_2$ as a $G$-module.
\end{enumerate}
\end{lemma}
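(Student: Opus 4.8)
The plan is to run the complete-reducibility argument familiar from the compact (positive definite) case, using Lemma~\ref{lem-1} as the substitute for positive definiteness wherever one needs to split off a summand. The one structural fact that drives everything is this: if $\eta$ is a $G$-submodule of a $G$-submodule $\xi\subset\otimes^kV^*$, then its orthogonal complement in $\xi$, namely $\eta^{\perp_\xi}:=\{v\in\xi:(v,w)=0\text{ for all }w\in\eta\}$, is again a $G$-submodule --- for $T\in G\subset\mathcal{O}$ and $v\in\eta^{\perp_\xi}$ one has $(Tv,w)=(v,T^{-1}w)=0$ since $T^{-1}w\in\eta$ --- and $\xi=\eta\oplus\eta^{\perp_\xi}$. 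Indeed $\eta$ is itself a submodule of $\otimes^kV^*$, so Lemma~\ref{lem-1} makes the inner product non-degenerate on $\eta$; hence $\eta\cap\eta^{\perp_\xi}=0$, and a dimension count via the surjection $\xi\to\eta^*$, $v\mapsto(v,\cdot)|_\eta$, gives the direct sum. Thus every submodule of $\xi$ has a complementary submodule, so $\xi$ is a semisimple $G$-module.

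Granting this, the existence of an orthogonal decomposition into irreducibles in Assertion~(1) follows by induction on $\dim\xi$: if $\xi$ is reducible, pick a proper nonzero submodule $\xi'$, split $\xi=\xi'\oplus(\xi')^{\perp_\xi}$, and apply the inductive hypothesis. For uniqueness of multiplicities I would, for a fixed irreducible $\eta$, consider the canonically defined $\eta$-isotypic submodule $\xi[\eta]$, the sum of all $G$-submodules of $\xi$ isomorphic to $\eta$; Schur's lemma over $\mathbb{R}$ (a nonzero $G$-map between irreducibles is an isomorphism, and $\operatorname{Hom}_G(\eta,\eta')=0$ for non-isomorphic irreducibles $\eta,\eta'$) shows that for any decomposition $\xi=\xi_1\oplus\cdots\oplus\xi_k$ as above one has $\xi[\eta]=\bigoplus_{\xi_i\cong\eta}\xi_i$, so the number of indices $i$ with $\xi_i\cong\eta$ is $\dim_D\operatorname{Hom}_G(\eta,\xi)$ with $D=\operatorname{End}_G(\eta)$, manifestly independent of the decomposition. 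For the last sentence of Assertion~(1), suppose $\xi_1$ occurs with multiplicity one, so $\xi[\xi_1]=\xi_1$, and let $\eta$ be any submodule of $\xi$. Using $\xi=\xi_1\oplus\xi_1^{\perp_\xi}$, the projection $\pi\colon\eta\to\xi_1$ is a $G$-map: if $\pi\neq0$, then since $\eta$ is semisimple some irreducible summand of $\eta$ maps isomorphically onto $\xi_1$, hence is a submodule of $\xi$ isomorphic to $\xi_1$, hence lies in $\xi[\xi_1]=\xi_1$ and, having the same dimension, equals $\xi_1$, so $\xi_1\subset\eta$; and if $\pi=0$, then $\eta\subset\xi_1^{\perp_\xi}$, i.e. $\xi_1\perp\eta$.

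Assertion~(2) is immediate from the same splitting: in a short exact sequence $0\to\xi_1\to\xi\to\xi_2\to0$ identify $\xi_1$ with its image, a $G$-submodule of $\xi\subset\otimes^kV^*$; then $\xi=\xi_1\oplus\xi_1^{\perp_\xi}$ and the quotient map restricts to a $G$-isomorphism $\xi_1^{\perp_\xi}\cong\xi_2$, whence $\xi\cong\xi_1\oplus\xi_2$. The only real subtlety --- the ``main obstacle'' the argument must negotiate --- is that the ambient inner product is indefinite, so complementary submodules do not exist for free as in the compact case; this is exactly what Lemma~\ref{lem-1} supplies, and once it is invoked for every submodule produced along the way (each of which is genuinely a submodule of $\otimes^kV^*$, so that Lemma~\ref{lem-1} applies) the rest is the formal theory of semisimple modules together with Schur's lemma over $\mathbb{R}$.
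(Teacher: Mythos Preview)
Your argument is correct and is precisely the approach the paper indicates: the paper itself gives no detailed proof of Lemma~\ref{lem-2}, merely stating that one combines Lemma~\ref{lem-1} with ``the same arguments as used in the positive definite setting'' and omitting details. You have supplied exactly those details --- using Lemma~\ref{lem-1} to guarantee that orthogonal complements of submodules are genuine complements, and then running the standard semisimplicity/Schur argument --- so there is nothing to add.
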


We can illustrate Lemma~\ref{lem-2} as follows. Decompose
$$\otimes^2V^*=\Lambda^2(V^*)\oplus S^2(V^*)$$
as the direct sum of the alternating and the symmetric bilinear forms. We can further decompose
$\Lambda^2(V^*)=\Lambda^2_\pm\oplus\chi\oplus\Lambda^2_{0,\mp}$ and
$S^2(V^*)=S_\pm^2\oplus\pone\oplus S^2_{0,\mp}$
where
$$\begin{array}{ll}
\Lambda^2_\pm:=\{\omega\in\Lambda^2:J_\pm^*\omega=\pm\omega\},&
\chi:=\Omega_\pm\cdot\mathbb{R},\\
\Lambda^2_{0,\mp}:=\{\omega\in\Lambda^2:J_\pm^*\omega=\mp\omega,\ \omega\perp\Omega_\pm\},
\vphantom{\vrule height 11pt}\\
S_\pm^2:=\{\theta\in S^2:J_\pm^*\theta=\pm\theta\},&
\pone:=\langle\cdot,\cdot\rangle\cdot\mathbb{R},
\vphantom{\vrule height 11pt}\\
S^2_{0,\mp}:=\{\theta\in S^2:J_\pm^*\theta=\mp\theta,\ \theta\perp\langle\cdot,\cdot\rangle\}.
\vphantom{\vrule height 11pt}
\end{array}$$
\begin{lemma}\label{lem-3}
Let $(V,\langle\cdot,\cdot\rangle,J_\pm)$ be a (para)-Hermitian vector space. We have the
following decomposition of $\Lambda^2(V^*)$, $S^2(V^*)$, and
$\otimes^2V^*$ into inequivalent and irreducible $\mathcal{U}_\pm^\star$ modules:
\smallbreak
\qquad\qquad$\Lambda^2(V^*)=\Lambda^2_\pm\oplus\chi\oplus\Lambda^2_{0,\mp}\,,\qquad S^2(V^*)=S_\pm^2\oplus\pone\oplus
S^2_{0,\mp}$,
\smallbreak\qquad\qquad$\otimes^2V^*=\Lambda^2_\pm\oplus\chi\oplus\Lambda^2_{0,\mp}\oplus S_\pm^2
\oplus\pone\oplus S^2_{0,\mp}$.
\end{lemma}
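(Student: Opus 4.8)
The plan is to verify, in order, three things: that each listed summand is a $\mathcal{U}_\pm^\star$-submodule, that the direct sum decompositions hold, and that the pieces are irreducible and pairwise inequivalent. The first claim is purely mechanical: since every $T\in\mathcal{U}_\pm^\star$ satisfies $T^*\Omega_\pm=\chi(T)\Omega_\pm$ and commutes or anticommutes with $J_\pm$, the conditions $J_\pm^*\omega=\pm\omega$ and $\omega\perp\Omega_\pm$ are preserved under the $\mathcal{U}_\pm^\star$-action (the sign $\pm$ in $J_\pm^*\omega=\pm\omega$ is insensitive to replacing $J_\pm$ by $-J_\pm$, which is exactly why the $\mathbb{Z}_2$-extension is the right group to use). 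Likewise $\chi$ and $\pone$ are one-dimensional, spanned respectively by $\Omega_\pm$ and $\langle\cdot,\cdot\rangle$, each obviously invariant. Thus all six modules are genuine $\mathcal{U}_\pm^\star$-submodules.

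Next I would establish the direct-sum decompositions themselves. On $\Lambda^2(V^*)$ the operator $J_\pm^*$ squares to $\pm\Id$ in the two cases; one checks that in the complex case $\Lambda^2$ splits into the $\pm1$-eigenspaces of $J_-^*$, and in the para-complex case the analogous eigenspace decomposition holds because $(J_+^*)^2=\Id$ and the relevant operator is diagonalizable (one can exhibit the projections $\frac12(\Id\pm J_\pm^*)$ or the appropriate variant). Then $\Lambda^2_{0,\mp}$ is by definition the orthocomplement of $\Omega_\pm\cdot\mathbb{R}$ inside the eigenspace $\{J_\pm^*\omega=\mp\omega\}$, and $\Omega_\pm$ itself lies in that eigenspace since $J_\pm^*\Omega_\pm=\mp\Omega_\pm$; the splitting of that eigenspace as $\chi\oplus\Lambda^2_{0,\mp}$ then needs Lemma~\ref{lem-1} to guarantee the inner product restricted to $\chi$ is non-degenerate (i.e. $\langle\Omega_\pm,\Omega_\pm\rangle\ne0$), so the orthogonal projection is well-defined. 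The same scheme works verbatim for $S^2(V^*)$ with $\langle\cdot,\cdot\rangle$ in place of $\Omega_\pm$. Adding $\Lambda^2(V^*)\oplus S^2(V^*)=\otimes^2V^*$ gives the six-term decomposition.

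The main obstacle — and the step that really uses representation theory rather than linear algebra — is proving irreducibility of the four ``big'' pieces $\Lambda^2_\pm$, $\Lambda^2_{0,\mp}$, $S^2_\pm$, $S^2_{0,\mp}$ and inequivalence of all six summands. For irreducibility I would argue that these spaces are already known to be irreducible under the unitary group $\mathcal{U}_\pm$ in the positive definite case (this is classical; cf. the Tricerri–Vanhecke framework referenced in Section~\ref{sect-2.3}), and then use Lemma~\ref{lem-1} together with Lemma~\ref{lem-2} to transfer the statement to the indefinite (para)-Hermitian setting: non-degeneracy of the induced form on any submodule means one can run the usual complexification/weight argument, or alternatively observe that an invariant subspace would have an invariant orthocomplement, reducing irreducibility to a dimension/character count that is signature-independent. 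Inequivalence follows by separating the summands via numerical invariants: dimension distinguishes $\chi$ and $\pone$ (both one-dimensional) from the rest and from each other once one checks $\chi\not\cong\pone$ — they are distinguished by the character, since $T$ acts on $\Omega_\pm$ by $\chi(T)$ but trivially on $\langle\cdot,\cdot\rangle$; within $\Lambda^2$ versus $S^2$ the action of a suitable reflection (or the sign under the swap in $\otimes^2$) separates alternating from symmetric; and within each of $\Lambda^2$, $S^2$ the $\pm$-eigenvalue of $J_\pm^*$ separates $\Lambda^2_\pm$ from $\Lambda^2_{0,\mp}$ and $S^2_\pm$ from $S^2_{0,\mp}$. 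Assembling these invariants shows no two of the six are isomorphic, completing the proof.
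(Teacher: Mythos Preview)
The paper does not supply a proof of this lemma; it is stated as a standard illustration of Lemma~\ref{lem-2}, with the six submodules defined just before the statement and their properties taken as known (see also the remark immediately following the lemma). So there is no ``paper's proof'' to compare against in detail; your outline is filling in what the authors regard as routine.

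That said, your argument has one genuine gap and one small slip. The slip: on $2$-tensors the pullback $J_\pm^*$ satisfies $(J_\pm^*)^2=\Id$ in \emph{both} the complex and the para-complex case (since $J_\pm^2=\pm\Id$ acts diagonally on each slot and the two signs cancel), not ``$\pm\Id$ in the two cases''. This does not damage the eigenspace splitting, but the sentence as written is wrong.

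The gap concerns irreducibility in the para-complex setting. You write that ``these spaces are already known to be irreducible under the unitary group $\mathcal{U}_\pm$ in the positive definite case'' and then transfer to general signature. This fails for $J_+$: there is no positive definite para-Hermitian structure (the signature is forced to be neutral), and more to the point the paper itself observes, in the remark right after the lemma, that $\Lambda^2_+$ is \emph{not} an irreducible $\mathcal{U}_+$-module. Irreducibility of $\Lambda^2_+$ genuinely requires the $\mathbb{Z}_2$-extension $\mathcal{U}_+^\star$; the element $T$ with $TJ_+=-J_+T$ is what prevents the $\pm1$-eigenspaces of $J_+$ on $V$ from producing invariant pieces of $\Lambda^2_+$. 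Your fallback ``dimension/character count that is signature-independent'' gestures at the right idea but, as written, does not actually carry out the argument, and the primary route you propose is simply unavailable in the para-complex case. To repair this you should argue irreducibility directly for $\mathcal{U}_\pm^\star$ (for instance via an explicit transitivity or highest-weight-type argument that uses an element interchanging $J_\pm$ and $-J_\pm$), rather than bootstrapping from $\mathcal{U}_\pm$.
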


We note that $\Lambda^2_{0,\mp}$ and $S^2_{0,\mp}$ are isomorphic $U_\pm$
modules, that $\Lambda^2_{0,\mp}$ is isomorphic to $S^2_{0,\mp}\otimes\chi$ as a $U_\pm^\star$ module,
and that $\Lambda^2_+$ is not an irreducible $U_+$ module. We complete our discussion of elementary
representation theory with the following diagonalization result (see, for example, the discussion in
\cite{BGN12}):

\begin{lemma}\label{lem-4}
If $\xi$ is a non-trivial proper $\mathcal{U}_\pm^\star$ submodule of 
$\Lambda^2_\pm\oplus\Lambda^2_\pm$, then there
exists
$(a,b)\ne(0,0)$ so 
$\xi=\xi(a,b):=\{(a\theta,b\theta)\}_{\theta\in\Lambda^2_\pm}
\subset\Lambda^2_\pm\oplus\Lambda^2_\pm$.
\end{lemma}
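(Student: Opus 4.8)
The plan is to deduce the result from Schur's lemma, once we know that $\Lambda^2_\pm$ is not merely irreducible as a $\mathcal{U}_\pm^\star$-module but is \emph{absolutely} irreducible, i.e. that $\operatorname{End}_{\mathcal{U}_\pm^\star}(\Lambda^2_\pm)=\mathbb{R}\cdot\Id$. Write $W:=\Lambda^2_\pm$; by Lemma~\ref{lem-3}, $W$ is an irreducible $\mathcal{U}_\pm^\star$-module. First I would apply Lemma~\ref{lem-2} to the $\mathcal{U}_\pm^\star$-submodule $W\oplus W\subset\otimes^2V^*$: since $W$ is irreducible, every submodule of $W\oplus W$ is a direct sum of copies of $W$ (any irreducible submodule maps nonzero to one of the two factors, hence is isomorphic to $W$ by Schur), and $W$ occurs with total multiplicity $2$. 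Hence a non-trivial proper submodule $\xi\subset W\oplus W$ must consist of exactly one copy of $W$: multiplicity $0$ would force $\xi=0$ and multiplicity $2$ would force $\dim\xi=2\dim W=\dim(W\oplus W)$, hence $\xi=W\oplus W$. So $\xi$ is irreducible with $\xi\cong W$.

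Next I would exhibit $\xi$ as a (twisted) graph. Fix an isomorphism $j:W\mapright{\approx}\xi$ of $\mathcal{U}_\pm^\star$-modules and let $\pi_1,\pi_2:W\oplus W\to W$ be the coordinate projections; set $\phi_i:=\pi_i\circ j\in\operatorname{End}_{\mathcal{U}_\pm^\star}(W)$. Then $\xi=\{(\phi_1\theta,\phi_2\theta):\theta\in W\}$ and $(\phi_1,\phi_2)\ne(0,0)$. If $\phi_1=0$, then $\phi_2$ is a nonzero endomorphism of the irreducible module $W$, hence invertible by Schur's lemma, so $\xi=0\oplus W=\xi(0,1)$; symmetrically, $\phi_2=0$ gives $\xi=W\oplus0=\xi(1,0)$. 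In the remaining case $\phi_1$ is invertible, and substituting $\theta\mapsto\phi_1^{-1}\theta$ yields $\xi=\{(\theta,\psi\theta):\theta\in W\}$ with $\psi:=\phi_2\phi_1^{-1}\in\operatorname{End}_{\mathcal{U}_\pm^\star}(W)$.

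The only substantive point --- and the expected main obstacle --- is then to show $\psi\in\mathbb{R}\cdot\Id$, equivalently that $\operatorname{End}_{\mathcal{U}_\pm^\star}(\Lambda^2_\pm)=\mathbb{R}\cdot\Id$. This is precisely where the passage to the $\mathbb{Z}_2$-extension $\mathcal{U}_\pm^\star$ is indispensable: over $\mathcal{U}_-$ the module $\Lambda^2_-$ carries the natural invariant complex structure $\mathcal{J}:\omega\mapsto\omega(J_-\cdot,\cdot)$, so $\operatorname{End}_{\mathcal{U}_-}(\Lambda^2_-)\supsetneq\mathbb{R}\cdot\Id$, and in the para-complex case $\Lambda^2_+$ is not even $\mathcal{U}_+$-irreducible (as noted above); however an element $T\in\mathcal{U}_\pm^\star$ with $TJ_\pm=-J_\pm T$ anticommutes with $\mathcal{J}$ (respectively interchanges the two $\mathcal{U}_+$-irreducible summands of $\Lambda^2_+$), which destroys these extra endomorphisms and leaves only the real scalars --- see the discussion in \cite{BGN12}. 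Note the $\mathbb{Z}_2$-extension is genuinely needed: $\{(\theta,\mathcal{J}\theta):\theta\in\Lambda^2_-\}$ is a $\mathcal{U}_-$-submodule that is not of the form $\xi(a,b)$, and it is not $\mathcal{U}_-^\star$-invariant. Granting $\operatorname{End}_{\mathcal{U}_\pm^\star}(W)=\mathbb{R}\cdot\Id$, we get $\psi=c\cdot\Id$ for some $c\in\mathbb{R}$, so $\xi=\xi(1,c)$; combining the three cases, every non-trivial proper $\mathcal{U}_\pm^\star$-submodule of $\Lambda^2_\pm\oplus\Lambda^2_\pm$ equals $\xi(a,b)$ for some $(a,b)\ne(0,0)$.
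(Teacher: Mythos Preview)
The paper does not actually give its own proof of this lemma; it merely records the statement and refers the reader to \cite{BGN12}. So there is no in-paper argument to compare against, and your sketch in fact supplies considerably more detail than the paper does. The overall strategy---reduce to a single copy of $W:=\Lambda^2_\pm$ via complete reducibility, realize $\xi$ as the image of $(\phi_1,\phi_2)$ with $\phi_i\in\operatorname{End}_{\mathcal{U}_\pm^\star}(W)$, and then invoke absolute irreducibility $\operatorname{End}_{\mathcal{U}_\pm^\star}(W)=\mathbb{R}\cdot\Id$---is exactly the standard one and is correct. Your remark that the $\mathbb{Z}_2$-extension is essential (because $\mathcal{J}:\omega\mapsto\omega(J_-\cdot,\cdot)$ is $\mathcal{U}_-$-equivariant but anticommutes with any $T$ satisfying $TJ_-=-J_-T$) is a nice explanation of \emph{why} the endomorphism ring collapses to $\mathbb{R}$.

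Two small points. First, the sentence ``apply Lemma~\ref{lem-2} to the $\mathcal{U}_\pm^\star$-submodule $W\oplus W\subset\otimes^2V^*$'' is not literally correct: by Lemma~\ref{lem-3} the module $\Lambda^2_\pm$ occurs with multiplicity one in $\otimes^2V^*$, so $W\oplus W$ does not embed there. You can fix this either by noting that the conclusions of Lemma~\ref{lem-2} (non-degenerate invariant inner product, hence complete reducibility and well-defined multiplicities) pass to external direct sums, or---more in the spirit of how the paper actually uses the lemma in Section~\ref{sect-3.4}---by observing that the relevant copy of $W\oplus W$ is $W_{\pm,9}\oplus W_{\pm,13}\subset\mathfrak{W}\subset\otimes^4V^*$. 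Second, your argument that the extra endomorphisms are ``destroyed'' is heuristic rather than a proof: to conclude $\operatorname{End}_{\mathcal{U}_\pm^\star}(W)=\mathbb{R}$ one still needs that $\operatorname{End}_{\mathcal{U}_\pm}(W)$ is no larger than $\mathbb{C}$ (complex case) or $\mathbb{R}\oplus\mathbb{R}$ (para case), so that killing $\mathcal{J}$ (resp.\ swapping the idempotents) really leaves only scalars. You defer this to \cite{BGN12}, which is precisely what the paper does, so on that score you are on equal footing with the authors.
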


\subsection{The Singer--Thorpe and the Higa decompositions}\label{sect-2.2}
We now examine the $\mathcal{O}$-module structure of $\mathfrak{R}$
and $\mathfrak{W}$. Let
$$
   S_0^2:=\{\theta\in S^2:\theta\perp\langle\cdot,\cdot\rangle\}\quad\text{and}\quad
  \mathfrak{C}:=\ker\{\rho\}\cap\mathfrak{R}
$$ 
be the
$\mathcal{O}$ modules of trace free symmetric
$2$-tensors and {\it Weyl conformal
curvature tensors}, respectively. We refer to Singer and Thorpe \cite{ST69} for
the proof of Assertion (1) and to Higa
\cite{H93,H94} for the proof of Assertion (2) in the following result:
\begin{theorem}\label{thm-7}
Let $n\ge4$. 
\begin{enumerate}
\item We may decompose $\mathfrak{R}=\pone\oplus S_0^2\oplus\mathfrak{C}$ as the orthogonal direct sum of
irreducible and inequivalent $\mathcal{O}$ modules.
\item We may decompose $\mathfrak{W}=\pone\oplus S_0^2\oplus\mathfrak{C}\oplus\mathfrak{P}$ as the orthogonal
direct sum of irreducible and inequivalent $\mathcal{O}$ modules. Here $\rho_a$ provides an $\mathcal{O}$ module isomorphism
from $\mathfrak{P}$ to $\Lambda^2$ with the inverse embedding
$\Xi:\Lambda^2\mapright{\approx}\mathfrak{P}\subset\mathfrak{W}$ given by:
\begin{equation}\label{eqn-10}
\begin{array}{l}
\Xi(\psi)(x,y,z,w):=2\psi(x,y)\langle z,w\rangle+\psi(x,z)\langle
y,w\rangle-\psi(y,z)\langle x,w\rangle\\
\phantom{\Xi(\psi)(x,y,z,w):}-\psi(x,w)\langle y,z\rangle+\psi(y,w)\langle
x,z\rangle\,.\vphantom{\vrule height 11pt}
\end{array}\end{equation}
\end{enumerate}\end{theorem}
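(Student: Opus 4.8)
The plan is to reduce both assertions to two kinds of ingredient: explicit $\mathcal{O}$-equivariant maps built from contractions and from the metric, which is routine multilinear algebra, together with an irreducibility statement for the resulting summands, which is the genuinely representation-theoretic input and which I would obtain by complexifying and appealing to the classical decomposition of tensors under $\mathfrak{so}(m,\mathbb{C})$.

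For Assertion (1), first observe that Ricci contraction defines an $\mathcal{O}$-equivariant map $\rho\colon\mathfrak{R}\to S^2(V^*)$ — symmetry of $\rho$ on $\mathfrak{R}$ being forced by (\ref{eqn-3}) — and that the metric trace splits $S^2(V^*)=\pone\oplus S_0^2$ orthogonally. Conversely, the Kulkarni--Nomizu product of $g$ with a symmetric $2$-tensor $h$ lies in $\mathfrak{R}$ and has Ricci tensor a nonzero affine function of $h$ and $\operatorname{Tr}_g h$; for $m\ge3$ this makes $\rho|_{\mathfrak{R}}$ onto $S^2(V^*)$ and produces $\mathcal{O}$-equivariant sections, giving the decomposition $\mathfrak{R}=\pone\oplus S_0^2\oplus\mathfrak{C}$ with $\mathfrak{C}=\ker\rho$. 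It then remains to see that $\pone$, $S_0^2$, $\mathfrak{C}$ are irreducible and pairwise inequivalent $\mathcal{O}$-modules for $m\ge4$; granting this, Lemma~\ref{lem-2}(1) makes the sum orthogonal. Irreducibility I would obtain by complexifying: the complexification of $\mathcal{O}$ is $O(m,\mathbb{C})$, and the classical decomposition of $\otimes^2 V^*_{\mathbb C}$ and $\otimes^4 V^*_{\mathbb C}$ under $\mathfrak{so}(m,\mathbb{C})$ shows these complexified modules are irreducible and inequivalent for $m\ge4$ — the one delicate point being $m=4$, where the further splitting of $\mathfrak{C}$ into (anti-)self-dual pieces is only $SO(4,\mathbb{C})$-invariant and is scrambled by the reflections in $O(4,\mathbb{C})$, so $\mathfrak{C}$ remains irreducible under the full orthogonal group — and irreducibility of a complexification forces irreducibility of the underlying real module.

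For Assertion (2), the extra summand is detected by the alternating Ricci tensor. I would first check by direct computation that the map $\Xi$ of (\ref{eqn-10}) genuinely lands in $\mathfrak{W}$, i.e.\ satisfies (\ref{eqn-1}), (\ref{eqn-2}) and (\ref{eqn-5}), and that $\rho_a\circ\Xi$ is a nonzero multiple of the identity on $\Lambda^2(V^*)$ (one finds $\rho(\Xi\psi)=-m\psi$). Hence $\Xi$ is injective, $\mathfrak{P}:=\Xi(\Lambda^2(V^*))\cong\Lambda^2(V^*)$ as an $\mathcal{O}$-module, and $\rho_a\colon\mathfrak{W}\to\Lambda^2(V^*)$ is onto. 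The structural heart is that $0\to\mathfrak{R}\to\mathfrak{W}\mapright{\rho_a}\Lambda^2(V^*)\to0$ is exact: if $A\in\mathfrak{W}$ with $\rho_a(A)=0$, then (\ref{eqn-5}) forces $A$ to be skew in its last pair of arguments, and the classical algebraic lemma — skew in the first pair, skew in the last pair, and the first Bianchi identity (\ref{eqn-2}) together imply the pair symmetry (\ref{eqn-3}) — gives $A\in\mathfrak{R}$. Splitting by $-m^{-1}\Xi$ yields $\mathfrak{W}=\mathfrak{R}\oplus\mathfrak{P}=\pone\oplus S_0^2\oplus\mathfrak{C}\oplus\mathfrak{P}$. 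Finally, $\Lambda^2(V^*)\cong\mathfrak{so}(V)$ is irreducible over $\mathcal{O}$ for $m\ge3$ — in $m=4$ the Hodge splitting $\Lambda^2=\Lambda^2_+\oplus\Lambda^2_-$ is only $SO$-invariant and reflections interchange the two summands — and it is inequivalent to $\pone$, $S_0^2$, $\mathfrak{C}$ (e.g.\ $\binom{m}{2}$ never equals $1$, $\tfrac{m(m+1)}{2}-1$, or $\dim\mathfrak{C}$ for $m\ge4$). With all four summands irreducible and pairwise inequivalent, Lemma~\ref{lem-2}(1) again delivers orthogonality of the whole decomposition.

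The step I expect to be the main obstacle is the irreducibility of the conformal Weyl module $\mathfrak{C}$, both in indefinite signature and, above all, in dimension $m=4$: the contractions, the explicit sections, and the verification of the symmetries of $\Xi$ are all mechanical, whereas showing that $\mathfrak{C}$ does not split under the full orthogonal group is the real representation-theoretic content, and $m=4$ is precisely where spurious $SO$-invariant splittings appear, so one must keep careful track of the orientation-reversing elements that live in $\mathcal{O}$. This is exactly the classical Singer--Thorpe and Higa material that the paper imports rather than reproves.
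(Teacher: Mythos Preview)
Your proposal is correct, but there is nothing to compare it against: the paper does not prove Theorem~\ref{thm-7}. Immediately before the statement it writes ``We refer to Singer and Thorpe \cite{ST69} for the proof of Assertion (1) and to Higa \cite{H93,H94} for the proof of Assertion (2) in the following result,'' and then moves on. Your sketch is precisely the standard argument those references carry out --- Ricci contraction with Kulkarni--Nomizu section for the Riemannian piece, the short exact sequence $0\to\mathfrak{R}\to\mathfrak{W}\to\Lambda^2\to0$ split by $\Xi$ for the Weyl piece, and complexification to $\mathfrak{so}(m,\mathbb{C})$ for irreducibility --- and your computation $\rho(\Xi\psi)=-m\psi$ and your handling of the $m=4$ subtlety (the $SO$-splittings of $\mathfrak{C}$ and $\Lambda^2$ being exchanged by reflections) are both accurate. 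You have in effect supplied what the paper deliberately outsources; your own closing sentence already recognizes this.
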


\subsection{The Tricerri-Vanhecke decompositions}\label{sect-2.3}
The following
decompositions of $\mathfrak{R}$ and $\mathfrak{K}_{\pm,\mathfrak{R}}$ as $\mathcal{U}_-$ modules was
given by Tricerri and Vanhecke \cite{TV81} in the positive definite setting; they extend easily to the
more general context
\cite{BGM10,BGN11}. The decomposition of $\mathfrak{W}$ as a
$\mathcal{U}_\pm^\star$ module then follows from Lemma~\ref{lem-3} and Theorem~\ref{thm-7}. 

\begin{theorem}\label{thm-8}
Let $(V,\langle\cdot,\cdot\rangle,J_\pm)$ be a (para)-Hermitian vector
space. We have the following decompositions of $\mathfrak{R}$, 
$\mathfrak{K}_{\pm,\mathfrak{R}}$, and $\mathfrak{W}$ as $\mathcal{U}_\pm^\star$
modules:
\begin{equation}\begin{array}{l}
  \mathfrak{R}=W_{\pm,1}\oplus...\oplus W_{\pm,10},\\
  \mathfrak{K}_{\pm,\mathfrak{R}}=W_{\pm,1}\oplus  W_{\pm,2}\oplus  W_{\pm,3},\label{eqn-11}
     \vphantom{\vrule height 11pt}\\
  \mathfrak{W}=W_{\pm,1}\oplus...\oplus W_{\pm,13}\,.\vphantom{\vrule height 11pt}
\end{array}\end{equation}
If $n=4$, we omit the modules 
$\{W_{\pm,5},W_{\pm,6},W_{\pm,10}\}$. If $n=6$, 
we omit the module $ W_{\pm,6}$. The decomposition of
Equation~(\ref{eqn-11}) is then into irreducible $\mathcal{U}_\pm^\star$ modules. We have $\mathcal{U}_\pm^\star$ module
isomorphisms:
\begin{eqnarray}
&&W_{\pm,1}\approx W_{\pm,4}\approx\pone,\quad
      W_{\pm,2}\approx W_{\pm,5}\approx S^2_{0,\mp},\quad
      W_{\pm,9}\approx W_{\pm,13}\approx\Lambda^2_\pm,\label{eqn-11a}\\
&&W_{\pm,8}\approx S_\pm^2,\quad W_{\pm,11}\approx\chi,\quad
W_{\pm,12}\approx\Lambda^2_{0,\mp}\,.\label{eqn-11b}
\end{eqnarray}
With exception of the isomorphisms described in Equation~(\ref{eqn-11a}), these are
inequivalent
$\mathcal{U}_\pm^\star$ modules. The isomorphism $\Psi$ from $\Lambda^2_\pm$ to $W_{\pm,9}$ is given by
setting 
\begin{eqnarray}
&&\Psi(\psi)(x,y,z,w):=2\langle x,J_\pm y\rangle\psi(z,J_\pm w)+2\langle z,J_\pm
w\rangle\psi(x,J_\pm y)\nonumber\\ 
&&\qquad\qquad\qquad\qquad+\langle x,J_\pm z\rangle\psi(y,J_\pm
w)+\langle y,J_\pm w\rangle\psi(x,J_\pm z)\label{eqn-12}\\ 
&&\qquad\qquad\qquad\qquad-\langle
x,J_\pm w\rangle\psi(y,J_\pm z)-\langle y,J_\pm z\rangle\psi(x,J_\pm w)\,.\nonumber
\end{eqnarray}
\end{theorem}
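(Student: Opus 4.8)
The plan is to reduce everything to classical facts and some elementary representation theory, proceeding in three stages corresponding to the three decompositions.

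First I would recall the Tricerri--Vanhecke decomposition of $\mathfrak{R}$ as a $\mathcal{U}_-$ module in the positive definite setting from \cite{TV81}, and note that the proof there is purely algebraic: one decomposes $\mathfrak{R}$ by means of the various Ricci-type contractions adapted to $J_-$ (the usual Ricci tensor, the $\ast$-Ricci tensor built using $J_-$, and the curvature component living in $\Lambda^2_\pm$ via the Bianchi-type identity), and then splits the resulting trace pieces using Lemma~\ref{lem-3}. The passage to indefinite signature and to the para-complex case is immediate because Lemma~\ref{lem-1} and Lemma~\ref{lem-2} guarantee that every $\mathcal{U}_\pm^\star$-submodule of $\otimes^4V^*$ is non-degenerate and that the isotypic decomposition is well-defined; one simply takes the same projection operators. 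This gives $\mathfrak{R}=W_{\pm,1}\oplus\cdots\oplus W_{\pm,10}$ together with the identification of which $W_{\pm,j}$ carry which scalar/trace data, hence the isomorphisms in Equations~(\ref{eqn-11a})--(\ref{eqn-11b}) that involve only modules inside $\mathfrak{R}$. The restriction to $\mathfrak{K}_{\pm,\mathfrak{R}}$ is the sub-sum $W_{\pm,1}\oplus W_{\pm,2}\oplus W_{\pm,3}$: one checks directly from the definition $\mathfrak{K}_\pm=\{A:A(x,y,z,w)=\mp A(x,y,J_\pm z,J_\pm w)\}$ which Tricerri--Vanhecke components survive, exactly as in \cite{TV81}. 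The low-dimensional collapses ($W_{\pm,5},W_{\pm,6},W_{\pm,10}$ vanish if $n=4$; $W_{\pm,6}$ vanishes if $n=6$) come from the standard dimension counts: those modules are built from $S^2_{0,\mp}\otimes(\text{something})$ or from Weyl-conformal-type pieces that are forced to be zero when $\bar m$ is small, and this is already in the cited references.

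For the decomposition of $\mathfrak{W}$ I would invoke Theorem~\ref{thm-7}~(2), which gives $\mathfrak{W}=\pone\oplus S^2_0\oplus\mathfrak{C}\oplus\mathfrak{P}$ as $\mathcal{O}$-modules with $\mathfrak{P}\cong\Lambda^2$ via $\rho_a$ and the explicit inverse $\Xi$ of Equation~(\ref{eqn-10}). Restricting the $\mathcal{O}$-action to $\mathcal{U}_\pm^\star$, the first three summands decompose further exactly as $\mathfrak{R}$ does, contributing $W_{\pm,1}\oplus\cdots\oplus W_{\pm,10}$; the new piece is $\mathfrak{P}\cong\Lambda^2(V^*)$, which by Lemma~\ref{lem-3} splits as $\Lambda^2_\pm\oplus\chi\oplus\Lambda^2_{0,\mp}$. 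Labeling the images of these three summands under $\Xi$ as $W_{\pm,11}$, $W_{\pm,12}$, $W_{\pm,13}$ (in some order to be fixed for consistency with \cite{H93,H94} and \cite{BGN11a}), we get $W_{\pm,11}\cong\chi$, $W_{\pm,12}\cong\Lambda^2_{0,\mp}$, $W_{\pm,13}\cong\Lambda^2_\pm$, which are precisely the remaining isomorphisms in Equations~(\ref{eqn-11a})--(\ref{eqn-11b}). This yields $\mathfrak{W}=W_{\pm,1}\oplus\cdots\oplus W_{\pm,13}$.

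It remains to pin down which modules are equivalent and which are not, and to exhibit the explicit map $\Psi$. The inequivalences among $W_{\pm,1},\dots,W_{\pm,10}$ are inherited from Tricerri--Vanhecke; the only subtlety is that we must check $W_{\pm,9}\cong\Lambda^2_\pm\cong W_{\pm,13}$ really is an isomorphism of $\mathcal{U}_\pm^\star$-modules and that $W_{\pm,9}$ is not equivalent to any other summand. Since $\Lambda^2_\pm$ is irreducible over $\mathcal{U}_\pm^\star$ by Lemma~\ref{lem-3} and appears in $\mathfrak{W}$ with multiplicity exactly two, Lemma~\ref{lem-4} tells us that any $\mathcal{U}_\pm^\star$-submodule of $W_{\pm,9}\oplus W_{\pm,13}$ isomorphic to $\Lambda^2_\pm$ is of the graph form $\xi(a,b)$; this gives a clean way to distinguish $W_{\pm,9}$ (which sits inside $\mathfrak{R}$, i.e. satisfies the pair symmetry $A(x,y,z,w)=A(z,w,x,y)$ and $\rho_a=0$) from $W_{\pm,13}$ (on which $\rho_a\neq0$). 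For $\Psi$, I would simply verify by direct computation that the right-hand side of Equation~(\ref{eqn-12}) defines an element of $\mathfrak{R}$ satisfying the (para)-K\"ahler identity~(\ref{eqn-8}) but \emph{not} the identity~(\ref{eqn-5}) in the trivial way—wait, rather it lies in $\mathfrak{R}\subset\mathfrak{W}$ and is non-zero when $\psi\neq0$, and then confirm it is $\mathcal{U}_\pm^\star$-equivariant; irreducibility of $\Lambda^2_\pm$ forces $\Psi$ to be injective, hence an isomorphism onto its image $W_{\pm,9}$. The main obstacle I expect is purely bookkeeping: making the numbering $W_{\pm,1},\dots,W_{\pm,13}$ consistent with the cited sources and verifying the symmetries of the explicit tensors $\Xi(\psi)$ and $\Psi(\psi)$ by hand, together with the dimension counts that kill $W_{\pm,5},W_{\pm,6},W_{\pm,10}$ in low dimension—none of which is conceptually hard but all of which requires care.
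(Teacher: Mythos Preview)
Your proposal is correct and follows essentially the same route as the paper. The paper does not give a detailed proof of this theorem at all: it simply cites Tricerri--Vanhecke \cite{TV81} (with the extensions \cite{BGM10,BGN11}) for the decompositions of $\mathfrak{R}$ and $\mathfrak{K}_{\pm,\mathfrak{R}}$, and then remarks that the decomposition of $\mathfrak{W}$ ``follows from Lemma~\ref{lem-3} and Theorem~\ref{thm-7}'' --- exactly the Higa splitting $\mathfrak{W}=\mathfrak{R}\oplus\mathfrak{P}$ with $\mathfrak{P}\cong\Lambda^2$ further refined by Lemma~\ref{lem-3}, which is precisely your second stage. One small slip to clean up: $W_{\pm,9}$ is \emph{not} contained in $\mathfrak{K}_{\pm,\mathfrak{R}}$ (only $W_{\pm,1},W_{\pm,2},W_{\pm,3}$ are), so $\Psi(\psi)$ need not satisfy the K\"ahler identity~(\ref{eqn-8}); the checks you actually need are that $\Psi(\psi)\in\mathfrak{R}$ and that $\Psi$ is $\mathcal{U}_\pm^\star$-equivariant, as you note after your self-correction.
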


It is worth describing the some of these in a bit more detail. Let $\{e_i\}$ be a basis for $V$. Set
$\varepsilon_{ij}:=\langle e_i,e_j\rangle$. Define $\rho_{J_\pm}(x,y):=\varepsilon^{il}A(e_i,x,J_\pm y,J_\pm e_l)$. We then have:
\begin{eqnarray*}
&&W_{\pm,7}=\{A\in\mathfrak{R}:A(J_\pm x,y,z,w)=A(x,y,J_\pm z,w)\},\\
&&W_{\pm,3}=\mathfrak{K}_{\pm,\mathfrak{R}}\cap\ker(\rho),\\
&&W_{\pm,6}=\{A\in\mathfrak{R}:J_\pm^*A=A\}\cap\{\mathfrak{K}_{\pm,\mathfrak{R}}\}^\perp\cap\{W_{\pm,7}\}^\perp
\cap\ker(\rho\oplus\rho_{J_\pm}),\\
&&W_{\pm,10}=\{A\in\mathfrak{R}:J_\pm^*A=-A\}\cap\ker(\rho\oplus\rho_{J_\pm})\,.
\end{eqnarray*}

\section{The proof of Theorem~\ref{thm-6}}\label{sect-3}
If $\eta$ is an irreducible
$\mathcal{U}_\pm^\star$ module and if $\xi$ is a submodule of $\otimes^4V^*$, let $n_\eta(\xi)$ be the
multiplicity with which $\eta$ appears in the decomposition of $\xi$ given in Lemma~\ref{lem-2}; 
note that $W_{\pm,4}\approx W_{\pm,1}$ and $W_{\pm,2}\approx W_{\pm,5}$. We apply
Theorem~\ref{thm-8}. If
$\eta$ is isomorphic to $W_{i,\pm}$ for $i\in\{1,2,3,4,5,6,7,8,10\}$, then $n_\eta(\Lambda^2)=0$ so:
$$n_\eta(\mathfrak{K}_{\pm,\mathfrak{W}})
=n_\eta(\mathfrak{K}_{\pm,\mathfrak{R}})=\left\{\begin{array}{lll}
1&\text{if }i=1,2,3,4,5\\
0&\text{if }i=6,7,8,10\end{array}\right\}\,.$$
Thus only the multiplicities of the representations $\{\chi,\Lambda^2_{0,\mp},\Lambda^2_\pm\}$ are at
issue.

\subsection{The module $\chi=\Xi(\Omega_\pm)$ for $m\ge4$}
Let $\{e_i\}$ be an orthonormal basis for $V$ with $J_\pm e_{2i-1}=e_{2i}$ and $J_\pm e_{2i}=\pm
e_{2i-1}$. Let $\varepsilon_{ij}:=\langle e_i,e_j\rangle$.
We use Equation~(\ref{eqn-10}) to see:
\begin{eqnarray*}
&&\phantom{\mp}\Xi(\Omega_\pm)(e_1,e_4,e_3,e_1)
     =-\langle e_4,J_\pm e_3\rangle\langle e_1,e_1\rangle
     =-\varepsilon_{11}\varepsilon_{44},\\ 
&&\mp\Xi(\Omega_\pm)(e_1,e_4,J_\pm e_3,J_\pm e_1)
     =\pm\langle e_1,J_\pm J_\pm e_1\rangle\langle e_4,J_\pm e_3\rangle
     =\varepsilon_{11}\varepsilon_{44}\,.
\end{eqnarray*}
Thus $\Xi(\Omega_\pm)$ does not satisfy the K\"ahler identity given in Equation~(\ref{eqn-8}). Consequently,
$n_\chi(\mathfrak{K}_{\pm,\mathfrak{W}})=0$.
\subsection{The module $W_{\pm,12}=\Xi(\Lambda^2_{0,\mp})$ for $m\ge6$}\label{sect-3.2}
Set
$$\psi_{0,\pm}:=e^1\otimes e^2-e^2\otimes e^1-\varepsilon_{11}\varepsilon_{33}\{e^3\otimes
e^4-e^4\otimes e^3\}\,.$$
Clearly
$\psi_{0,\pm}\perp\Omega_\pm$. Since $J_\pm^*\psi_{0,\pm}=\mp\psi_{0,\pm}$, $\psi_{0,\pm}\in
\Lambda^2_{0,\mp}$. 
By Equation (\ref{eqn-10}):
\begin{eqnarray*}
&&\phantom{\mp}\Xi(\psi_{0,\pm})(e_5,e_1,e_2,e_5)
   =-\psi_{0,\pm}(e_1,e_2)\langle e_5,e_5\rangle=-\varepsilon_{55},\\
&&\mp\Xi(\psi_{0,\pm})(e_5,e_1,J_\pm e_2,J_\pm e_5)
=0\,.
\end{eqnarray*}
Consequently $\Xi(\psi_{0,\pm})$ does not satisfy the K\"ahler identity and we conclude that
$n_{\Lambda^2_{0,\mp}}(\mathfrak{K}_{\pm,\mathfrak{W}})=0$ if $m\ge6$.

\subsection{The module $\Lambda^2_{0,\mp}$ if $m=4$}\label{sect-3.3}
The argument given above in Section~\ref{sect-3.2} does not, of course, pertain if $m=4$ since we can not examine 
$\Xi(\psi_{0,\pm})(e_5,e_1,e_2,e_5)$.
Let $\eta=\Lambda^2_{0,\mp}$. As noted above, $n_\eta(\mathfrak{K}_{\pm,\mathfrak{W}})\le1$. Thus if we can exhibit
a non-trivial element of $W_{\pm,12}\cap\mathfrak{K}_{\pm,\mathfrak{W}}$, we will have
$n_\eta(\mathfrak{K}_{\pm,\mathfrak{W}})=1$. We work in the positive definite setting for the
moment to simplify the argument. Let
\begin{eqnarray*}
&&\psi_{0,+}:=e^1\otimes e^2-e^2\otimes e^1-e^3\otimes e^4+e^4\otimes e^3,\\
&&\langle\cdot,\cdot\rangle:=e^1\otimes e^1+e^2\otimes e^2+e^3\otimes e^3+e^4\otimes e^4\,.
\end{eqnarray*}
Decompose
$A:=\Xi(\psi_{0,+})=A_1+A_2+A_3+A_4+A_5$ using the notation of Equation~(\ref{eqn-10}) where
$$\begin{array}{rr}
A_1(x,y,z,w):=2\psi_{0,+}(x,y)\langle z,w\rangle,&A_2(x,y,z,w):=\psi_{0,+}(x,z)\langle y,w\rangle,\\
A_3(x,y,z,w):=-\psi_{0,+}(y,z)\langle x,w\rangle,&\qquad A_4(x,y,z,w):=-\psi_{0,+}(x,w)\langle
y,z\rangle,\vphantom{\vrule height 11pt}\\ A_5(x,y,z,w):=\psi_{0,+}(y,w)\langle
x,z\rangle\,.\vphantom{\vrule height 11pt}
\end{array}$$
As a short hand, we set $e^{ijkl}:=e^i\otimes e^j\otimes e^k\otimes e^l$. We may then express:
\medbreak\quad
$A_1=2e^{1211}+2e^{1222}+2e^{1233}+2e^{1244}-2e^{2111}-2e^{2122}-2e^{2133}-2e^{2144}$
\smallbreak\qquad\quad
$-2e^{3411}-2e^{3422}-2e^{3433}-2e^{3444}+2e^{4311}+2e^{4322}+2e^{4333}+2e^{4344}$,
\smallbreak\quad
$A_2=e^{1121}+e^{1222}+e^{1323}+e^{1424}-e^{2111}-e^{2212}-e^{2313}-e^{2414}$
\medbreak\qquad\quad
$-e^{3141}-e^{3242}-e^{3343}-e^{3444}+e^{4131}+e^{4232}+e^{4333}+e^{4434}$,
\smallbreak\quad
$A_3=-e^{1121}-e^{2122}-e^{3123}-e^{4124}+e^{1211}+e^{2212}+e^{3213}+e^{4214}$
\medbreak\qquad\quad
$+e^{1341}+e^{2342}+e^{3343}+e^{4344}-e^{1431}-e^{2432}-e^{3433}-e^{4434}$,
\smallbreak\quad
$A_4=-e^{1112}-e^{1222}-e^{1332}-e^{1442}+e^{2111}+e^{2221}+e^{2331}+e^{2441}$
\smallbreak\qquad\quad
$+e^{3114}+e^{3224}+e^{3334}+e^{3444}-e^{4113}-e^{4223}-e^{4333}-e^{4443}$,
\smallbreak\quad
$A_5=e^{1112}+e^{2122}+e^{3132}+e^{4142}-e^{1211}-e^{2221}-e^{3231}-e^{4241}$
\smallbreak\qquad\quad
$-e^{1314}-e^{2324}-e^{3334}-e^{4344}+e^{1413}+e^{2423}+e^{3433}+e^{4443}$.
\medbreak\noindent We may ignore the terms in $A_1$ as these belong to $\mathfrak{K}_{+}$. The
remaining terms yield a tensor which is anti-symmetric both in the first two and in the last two
indices. Thus automatically terms of the form
$e^{**12}$ or $e^{**34}$ will belong to $\mathfrak{K}_+$ and can be ignored. Using the $\mathbb{Z}_2$ symmetry, we may consider
terms $e^{ijkl}$ where $i<j$ and $k<l$. We establish the K\"ahler identity and show that
$n_\eta(\mathfrak{K}_{+,\mathfrak{W}})=1$ if $m=4$ in the positive definite setting by examining the
following crucial terms:
\smallbreak\centerline{$\begin{array}{|r|r||r|r|}
\noalign{\hrule}\text{Term}&\text{Coeff.}&\text{Term}&\text{Coeff.}\\
\noalign{\hrule}\noalign{\hrule}e^{1323}&A_2=1&e^{1314}&A_5=-1\vphantom{\vrule height 11pt}\\
\noalign{\hrule}e^{1424}&A_2=1&e^{1413}&A_5=1\vphantom{\vrule height 11pt}\\
\noalign{\hrule}e^{2313}&A_2=-1&e^{2324}&A_5=-1\vphantom{\vrule height 11pt}\\
\noalign{\hrule}e^{2414}&A_2=-1&e^{2423}&A_5=1\vphantom{\vrule height 11pt}\\
\noalign{\hrule}
\end{array}$}

\medbreak We now complexify and let $W:=V\otimes_{\mathbb{R}}\mathbb{C}$. 
Extend $\langle\cdot,\cdot\rangle$, $J_-$, and
$A$ to be complex bilinear, complex linear, and complex multi-linear, respectively.
Let:
$$V_{2,2}:=\operatorname{Span}_{\mathbb{R}}\{\sqrt{-1}e_1,\sqrt{-1}e_2,e_3,e_4\}\,.$$
Then $(\langle\cdot,\cdot\rangle,J_-)$ restricts to a pseudo-Hermitian almost complex structure on
$V_{2,2}$ of signature $(2,2)$. Note that
\begin{eqnarray*}
&&\operatorname{Re}(A|_{V_{2,2}})\in W_{\pm,12}(V_{2,2})\cap\mathfrak{K}_{-,\mathfrak{W}}(V_{2,2}),\\
&&\operatorname{Im}(A|_{V_{2,2}})\in W_{\pm,12}(V_{2,2})\cap\mathfrak{K}_{-,\mathfrak{W}}(V_{2,2})\,.
\end{eqnarray*}
Since $A|_{V_{2,2}}\ne0$, at least one of these tensors is non-trivial and the desired conclusion
follows for neutral signature $(2,2)$; a similar argument applied to
$$V_{4,0}:=\operatorname{Span}_{\mathbb{R}}\{\sqrt{-1}e_1,\sqrt{-1}e_2,\sqrt{-1}e_3,\sqrt{-1}e_4\}$$
establishes the desired
result in signature $(4,0)$ (which is the negative definite setting). Finally, by considering 
$$U_{2,2}:=\operatorname{Span}_{\mathbb{R}}\{e_1,\sqrt{-1}e_2,e_3,\sqrt{-1}e_4\}$$
and $J_+:=\sqrt{-1}J_-$, we can construct an example in the para-complex setting.

\subsection{The module $\Lambda^2_\pm$ if $m\ge6$}\label{sect-3.4}
Let $\eta=\Lambda^2_\pm$. Then $W_{\pm,9}\oplus W_{\pm,13}\approx 2\cdot \eta$.
We adopt the
notation of Equation (\ref{eqn-10}) and of Equation (\ref{eqn-12}). For $(a,b)\ne(0,0)$, let
$$\xi(a,b):=\operatorname{Range}\{a\Xi+b\Psi\}\subset W_{\pm,9}\oplus W_{\pm,13}\,.$$
By
Lemma~\ref{lem-4}, every non-trivial proper submodule of
$W_{\pm,9}\oplus W_{\pm,13}$ is isomorphic to $\xi(a,b)$ for some $(a,b)\ne0$. We suppose 
$\xi(a,b)\subset\mathfrak{K}_{\pm,\mathfrak{W}}$ and thus
$$
(a\Xi+b\Psi)\psi\in\mathfrak{K}_{\pm,\mathfrak{W}}\quad\text{for all}\quad\psi_\pm\in\Lambda^2_\pm\,.
$$ 
Set
$\psi_\pm:=e^1\otimes e^3-e^3\otimes e^1\pm e^2\otimes e^4\mp e^4\otimes e^2$.
Then $J_\pm^*\psi_\pm=\pm\psi_\pm$ so $\psi_\pm\in\Lambda^2_\pm$. We show that $b=0$ by checking:
\begin{eqnarray*}
&&a\Xi(\psi_\pm)(e_5,e_6,e_1,e_4)=0,\\
&&\mp a\Xi(\psi_\pm)(e_5,e_6,J_\pm e_1,J_\pm e_4)=0,\\
&&b\Psi(\psi_\pm)(e_5,e_6,e_1,e_4)
   =2b\langle e_5,J_\pm e_6\rangle\psi_\pm(e_1,J_\pm e_4)=2b\varepsilon_{55},\\
&&\mp b\Psi(\psi_\pm)(e_5,e_6,J_\pm e_1,J_\pm e_4)=\mp2b\langle e_5,J_\pm e_6\rangle\psi_\pm(J_\pm
e_1,J_\pm J_\pm e_4)=-2b\varepsilon_{55}\,.
\end{eqnarray*}
We show that $a=0$ and complete the proof of
Theorem~\ref{thm-6} if $m\ge6$ by checking:
\begin{eqnarray*}
&&a\Xi(\psi_\pm)(e_5,e_1,e_3,e_5)=-a\psi_\pm(e_1,e_3)\langle e_5,e_5\rangle=-a\varepsilon_{55},\\
&&\mp a\Xi(\psi_\pm)(e_5,e_1,e_4,e_6)=0\,.
\end{eqnarray*}

\subsection{The module $\Lambda^2_\pm$ if $m=4$}
Again, the argument given in Section~\ref{sect-3.4} is not available if $m=4$ since, for example, we can
not examine
$(e_5,e_1,e_4,e_6)$. Let $\eta=\Lambda^2_\pm$. Again, we first work in the positive definite setting.
Since
$n_\eta(\mathfrak{K}_{+,\mathfrak{R}})=0$, to show $n_\eta(\mathfrak{K}_{+,\mathfrak{W}})=1$ it suffices
to construct a suitable element of
$\mathfrak{K}_{+,\mathfrak{W}}$.  Let 
\begin{eqnarray*}
&&\psi_-:=e^1\otimes e^3-e^3\otimes e^1-e^2\otimes e^4+e^4\otimes e^2\in\Lambda^2_-,\\
&&\langle\cdot,\cdot\rangle:=e^1\otimes e^1+e^2\otimes e^2+e^3\otimes e^3+e^4\otimes e^4\,.
\end{eqnarray*}
Adopt the notation of Equation~(\ref{eqn-10}) to decompose $\Xi(\psi_-)=F+G+H+J+K$ where
$$\begin{array}{rr}
F(x,y,z,w):=2\psi_-(x,y)\langle z,w\rangle,&G(x,y,z,w):=\psi_-(x,z)\langle y,w\rangle,\\
H(x,y,z,w):=-\psi_-(y,z)\langle x,w\rangle,&J(x,y,z,w):=-\psi_-(x,w)\langle y,z\rangle ,\\
K(x,y,z,w):=\psi_-(y,w)\langle x,z\rangle\,.
\end{array}$$
We compute:
\medbreak\quad
$F=+2e^{1311}+2e^{1322}+2e^{1333}+2e^{1344}-2e^{3111}-2e^{3122}-2e^{3133}-2e^{3144}$
\smallbreak\qquad\quad
$+2e^{4211}+2e^{4222}+2e^{4233}+2e^{4244}-2e^{2411}-2e^{2422}-2e^{2433}-2e^{2444}$,
\medbreak\quad
$G=+e^{1131}+e^{1232}+e^{1333}+e^{1434}-e^{3111}-e^{3212}-e^{3313}-e^{3414}$
\smallbreak\qquad\quad
$+e^{4121}+e^{4222}+e^{4323}+e^{4424}-e^{2141}-e^{2242}-e^{2343}-e^{2444}$,
\medbreak\quad
$H=-e^{1131}-e^{2132}-e^{3133}-e^{4134}+e^{1311}+e^{2312}+e^{3313}+e^{4314}$
\smallbreak\qquad\quad
$-e^{1421}-e^{2422}-e^{3423}-e^{4424}+e^{1241}+e^{2242}+e^{3243}+e^{4244}$,
\medbreak\quad
$J=-e^{1113}-e^{1223}-e^{1333}-e^{1443}+e^{3111}+e^{3221}+e^{3331}+e^{3441}$
\smallbreak\qquad\quad
$-e^{4112}-e^{4222}-e^{4332}-e^{4442}+e^{2114}+e^{2224}+e^{2334}+e^{2444}$,
\medbreak\quad
$K=+e^{1113}+e^{2123}+e^{3133}+e^{4143}-e^{1311}-e^{2321}-e^{3331}-e^{4341}$
\smallbreak\qquad\quad
$+e^{1412}+e^{2422}+e^{3432}+e^{4442}-e^{1214}-e^{2224}-e^{3234}-e^{4244}$.
\medbreak\noindent
Next we examine the role of $\Psi$.
Set $\tilde \varepsilon(x,y):=\langle x,Jy\rangle $ and $\tilde\psi_-(x,y):=\psi_-(x,Jy)$. We expand 
$\Psi(\psi_-)=R+S+T+U+V+W$
where
$$\begin{array}{rr}
R(x,y,z,w):=2\tilde \varepsilon(x,y)\tilde\psi_-(z,w),&S(x,y,z,w):=2\tilde \varepsilon(z,w)\tilde\psi_-(x,y),\vphantom{\vrule
heigh 11pt}\\ T(x,y,z,w):=\tilde \varepsilon(x,z)\tilde\psi_-(y,w),&U(x,y,z,w):=\tilde
\varepsilon(y,w)\tilde\psi_-(x,z),\vphantom{\vrule height 11pt}\\ V(x,y,z,w):=-\tilde
\varepsilon(x,w)\tilde\psi_-(y,z),&W(x,y,z,w):=-\tilde \varepsilon(y,z)\tilde\psi_-(x,w)\,.\vphantom{\vrule height 11pt}
\end{array}$$
We compute:
\medbreak\quad
$\tilde\varepsilon=-e^1\otimes e^2+e^2\otimes e^1-e^3\otimes e^4+e^4\otimes e^3$,
\medbreak\quad
$\tilde\psi_-=-e^1\otimes e^4+e^4\otimes e^1-e^2\otimes e^3+e^3\otimes e^2$,
\medbreak\quad
$R=-2e^{1241}+2e^{2141}-2e^{3441}+2e^{4341}+2e^{1214}-2e^{2114}+2e^{3414}-2e^{4314}$
\smallbreak\qquad\quad
$-2e^{1232}+2e^{2132}-2e^{3432}+2e^{4332}+2e^{1223}-2e^{2123}+2e^{3423}-2e^{4323}$,
\medbreak\quad
$S=-2e^{4112}+2e^{4121}-2e^{4134}+2e^{4143}+2e^{1412}-2e^{1421}+2e^{1434}-2e^{1443}$
\smallbreak\qquad\quad
$-2e^{3212}+2e^{3221}-2e^{3234}+2e^{3243}+2e^{2312}-2e^{2321}+2e^{2334}-2e^{2343}$,
\medbreak\quad
$T=-e^{1421}+e^{2411}-e^{3441}+e^{4431}+e^{1124}-e^{2114}+e^{3144}-e^{4134}$
\smallbreak\qquad\quad
$-e^{1322}+e^{2312}-e^{3342}+e^{4332}+e^{1223}-e^{2213}+e^{3243}-e^{4233}$,
\medbreak\quad
$U=-e^{4112}+e^{4211}-e^{4314}+e^{4413}+e^{1142}-e^{1241}+e^{1344}-e^{1443}$
\smallbreak\qquad\quad
$-e^{3122}+e^{3221}-e^{3324}+e^{3423}+e^{2132}-e^{2231}+e^{2334}-e^{2433}$,
\medbreak\quad
$V=e^{1412}-e^{2411}+e^{3414}-e^{4413}-e^{1142}+e^{2141}-e^{3144}+e^{4143}$
\smallbreak\qquad\quad
$+e^{1322}-e^{2321}+e^{3324}-e^{4323}-e^{1232}+e^{2231}-e^{3234}+e^{4233}$,
\medbreak\quad
$W=e^{4121}-e^{4211}+e^{4341}-e^{4431}-e^{1124}+e^{1214}-e^{1344}+e^{1434}$
\smallbreak\qquad\quad
$+e^{3122}-e^{3212}+e^{3342}-e^{3432}-e^{2123}+e^{2213}-e^{2343}+e^{2433}$.
\medbreak
We may ignore the $F$ and the $S$ terms as these belong to $\mathfrak{K}_+$. The remaining terms
yield a tensor which is anti-symmetric in the first indices and anti-symmetric in the last indices. Thus automatically things of
the form
$e^{**12}$ or
$e^{**34}$ belong to
$\mathfrak{K}_+$ and don't need to be worried about.
Thus the only terms which matter are the following:

$$\begin{array}{|r|r|r|r|r|}
\noalign{\hrule}\text{Term}&\text{Coef}&\text{Coef}&\text{Coef}&\text{Contribution}\cr
\noalign{\hrule}e^{1223}&J=-1&R=2&T=1&-a+3b\vphantom{\vrule height 11pt}\cr
\noalign{\hrule}e^{1214}&K=-1&R=2&W=1&-a+3b\vphantom{\vrule height 11pt}\cr
\noalign{\hrule}e^{3423}&H=-1&R=2&U=1&-a+3b\vphantom{\vrule height 11pt}\cr
\noalign{\hrule}e^{3414}&G=-1&R=2&V=1&-a+3b\vphantom{\vrule height 11pt}\cr
\noalign{\hrule}
\end{array}$$
Thus we must have $-a+3b=0$ so we may take $a=3$ and $b=1$. This completes the proof in signature
$(0,4)$; the remaining cases are handled using the same techniques used in Section~\ref{sect-3.3}.

\section{The proof of Theorem~\ref{thm-5}}\label{sect-4}
Adopt the notation of Equation~(\ref{eqn-7}). Fix a bilinear form
$\varepsilon=(\varepsilon_{ij})$ on $\mathbb{R}^m$ which is $\pm$-invariant under $J_\pm$. 
Let ``$\circ$" denote symmetric tensor product. Let $\theta\in S^2_\mp\otimes S^2$. We form the germ of a pseudo-Riemannian metric which is
$\pm$-invariant under the action of $J_\pm$ by setting:
$$g=\varepsilon+\theta_{ijkl}x^kx^l dx^i\circ dx^j\,;$$
$g$ is a (para)-Hermitian metric on a neighborhood $\mathcal{O}$ of $0$ in $\mathbb{R}^m$. By
Theorem~\ref{thm-4}~(2) there is a unique Weyl connection $\nabla=\nabla(\theta)$ so that $(\mathcal{O},J_\pm,g,\nabla)$ is a
(para)-K\"ahler Weyl manifold. Let $\Theta(\theta):=R^\nabla(0)$;
$\Theta$ defines an equivariant linear map
$$\Theta:S^2_\mp\otimes S^2\rightarrow\mathfrak{K}_{\pm,\mathfrak{W}}\,.$$
To show that $\Theta$ is surjective and complete the proof of Theorem~\ref{thm-5}, we must
to show:
$$n_\eta(\operatorname{Range}(\Theta))=1\quad\text{for}\quad\eta\in
\{\pone,S_{0,\pm}^2,W_{\pm,3},\Lambda^2_{0,\mp},\Lambda^2_\pm\}\,.$$

\subsection{The representations $W_{\pm,i}$ for $i=1,2,3$}
Let $R^g(0)$ be the curvature of the Levi-Civita connection at the origin. The map $\mathcal{L}:\theta\rightarrow
R^g(0)$ is a linear function of $\theta$ given by:
$$(\mathcal{L}\theta)(x,y,z,w):= \theta(x,z,y,w)+\theta(y,w,x,z)-\theta(x,w,y,z)-\theta(y,z,x,w)\,.$$
We set $A:=\mathcal{L}(\theta)$. Similarly the map
$K_\pm:\Theta\rightarrow d\Omega^g$ is a linear map which takes $S^2_\mp(V^*)\otimes S^2(V^*)$ to $\Lambda^3(V^*)\otimes
V^*$. It is given by:
$$\{(K_\pm\Theta)(x,y,z)\}(w):=\Theta(x,J_\pm y,z,w)
+\Theta(y,J_\pm z,x,w)+\Theta(z,J_\pm x,y,w)\,.
$$
This shows that $\ker(K_\pm)$ is invariant under the action of $\mathcal{U}_\pm^\star$.
Clearly $\theta\in\ker(K_\pm)$ if and only if $g_\theta$ is a K\"ahler metric. On $\ker(K_\pm)$, we have
$\Theta=\mathcal{L}$ since $\phi=0$. Thus:
$$
\mathcal{L}:\ker(K_\pm)\rightarrow W_{1,\pm}\oplus W_{2,\pm}\oplus W_{3\pm}\,.
$$
Take
$$\Theta=\textstyle\frac12(e^1\otimes e^1\mp e^2\otimes e^2)\otimes (e^1\otimes e^1+e^2\otimes e^2)$$
so that the metric has the form
$$g_\Theta=\varepsilon+\frac12
(u_1^2+u_2^2)(du_1^2\mp du_2^2)\,.$$
The metric $g_\Theta$ is K\"ahler since it takes the form
$M_2\times\mathbb{C}$ where $M_2$ is a Riemann surface. Thus $\Theta\in\ker(K_\pm)$. Furthermore, the only
non-zero curvature components of the curvature tensor $A=R^g(0)$ at the origin, up to the usual $\mathbb{Z}_2$
symmetries, are given by
$$A(e_1,e_2,e_2,e_1)=1\,.$$

The symmetric Ricci tensor $\rho_s(x,y):=\frac12(\rho(x,y)+\rho(y,x))$ defines a map from
$\mathfrak{K}_{\pm,\mathfrak{W}}$ to $S^2$. We have
$$\rho_s(e_i,e_j)=\left\{\begin{array}{rl}
\varepsilon_{22}&\text{if }i=j=1,\\
\varepsilon_{11}&\text{if }i=j=2,\\
0&\text{otherwise}\end{array}\right\}\,.$$
Since $\rho_s$ is neither a multiple of $\langle\cdot,\cdot\rangle$ nor is $\rho_s$ perpendicular to
$\langle\cdot,\cdot\rangle$, $\rho_s$ has components both in $\pone$ and in $S^2_{0,\mp}$. Consequently
$$W_{\pm,1}\oplus W_{\pm,2}\subset\mathcal{L}(K_\pm)\,.$$

Let $S\in S_\mp^2$. Following \cite{TV81}, define:
\begin{eqnarray*}
S_1(x,y,z,w)&:=&\langle x,z\rangle S(y,w)+\langle y,w\rangle S(x,z)\\
&&-\langle x,w\rangle
S(y,z)-\langle y,z\rangle S(x,w)\\
S_2(x,y,z,w)&:=&2\langle x,J_\pm y\rangle S(z,J_\pm w)+2\langle z,J_\pm w\rangle S(x,J_\pm y)\\
&&+\langle x, J_\pm z\rangle S(y, J_\pm w)+\langle y, J_\pm w\rangle S(x, J_\pm z)\\
&&-\langle x, J_\pm w\rangle S(y, J_\pm z)-\langle y, J_\pm z\rangle S(x, J_\pm w)\,.
\end{eqnarray*}
Then the map $\Sigma: S\rightarrow S_1\mp S_2$ splits $\rho_s$
modulo a suitable normalizing constant.\footnote{This result was established in the positive definite
setting; it extends easily to the general context} We have:
$$\Sigma(\rho_s)(e_1,e_3,e_3,e_1)=-\varepsilon_{33}\varepsilon_{22}$$
and thus $\Sigma(\rho_s)$ is not a multiple of $R$ so $R$ has a
non-zero component in
$W_{\pm,3}$ and
$$W_{\pm,3}\subset\mathcal{L}(K_\pm)\,.$$

\subsection{The representations $\Lambda^2_\pm$ and $\Lambda^2_{0,\mp}$} 
The alternating part of the Ricci tensor, $\rho_a$ provides a map from
$\mathfrak{K}_{\pm,\mathfrak{W}}$ to $\Lambda^2$. If we can show $\rho_a\Theta$ is a surjective map to
$\Lambda^2_{0,\mp}\oplus\Lambda^2_{\pm}$, it will follow from Lemma~\ref{lem-2} that
$n_\eta(\mathfrak{K}_{\pm,\mathfrak{W}})\ge1$ which will complete the proof. We have that
$\phi$ is a multiple of $J_\pm^*\delta\Omega_\pm$ and that $d\phi$ is a multiple of $\rho_a$. Thus it
will suffice to give an example where $dJ_\pm^*\delta\Omega_\pm$ has components in both
$\Lambda^2_{0,\mp}$ and
$\Lambda^2_\pm$. Suppose $f(x)=x_1x_3$. Let:
$$ds^2:=\varepsilon_{11}e^{2f(x_1,x_3)}(dx^1\otimes dx^1\mp dx^2\otimes dx^2)
+\varepsilon_{22}(dx^3\otimes dx^3\mp dx^4\otimes dx^4)\,.$$
We have \cite{BGGH11}:
\begin{eqnarray*}
(\nabla^g\Omega_\pm)(\partial_{x_i},\partial_{x_j};\partial_{x_k})
     &=&\textstyle\frac12\{
g(\partial_{x_i},\partial_{x_k};J_\pm \partial_{x_j})
     -g(\partial_{x_j},\partial_{x_k};J_\pm \partial_{x_i})\\
&&+g(J_\pm\partial_{x_i},\partial_{x_k};\partial_{x_j})
     -g(J_\pm \partial_{x_j},\partial_{x_k};\partial_{x_i})\}\,.
\end{eqnarray*}
This permits us to compute that:
$$
(\nabla^{g}\Omega_\pm)(\partial_{x_1},\partial_{x_3};\partial_{x_k})=
\left\{\begin{array}{rll}
\mp\varepsilon_{11}e^{2f}\partial_{x_3}f&\text{if}&k=2\\
0&\text{if}&k\ne2
\end{array}\right\}\,.$$
The covariant derivative of the K\"ahler form has the symmetries \cite{BGGH11}:
\begin{eqnarray*}
(\nabla^g\Omega_\pm)(x,y;z)&=&-(\nabla^g\Omega_\pm)(y,x;z)
=\pm(\nabla^g\Omega_\pm)(J_\pm x,J_\pm y;z)
\\
&=&\mp
(\nabla^g\Omega_\pm)(x,J_\pm y;J_\pm z)\,.\vphantom{\vrule  height 12pt}
\end{eqnarray*}
It now follows that the non-zero components of $\nabla^{g}\Omega_\pm$ are given, up to the
$\mathbb{Z}_2$ symmetry in the first components, by:
\begin{eqnarray*}
&&(\nabla^{g}\Omega_\pm)(\partial_{x_1},\partial_{x_3};\partial_{x_2})
     =\mp\varepsilon_{11}e^{2f}\partial_{x_3}f,\\
&&(\nabla^{g}\Omega_\pm)(\partial_{x_1},\partial_{x_4};\partial_{x_1})
     =\pm\varepsilon_{11}e^{2f}\partial_{x_3}f,\\
&&(\nabla^{g}\Omega_\pm)(\partial_{x_2},\partial_{x_4};\partial_{x_2})
     =-\varepsilon_{11}e^{2f}\partial_{x_3}f,\\
&&(\nabla^{g}\Omega_\pm)(\partial_{x_2},\partial_{x_3};\partial_{x_1})
   =\pm\varepsilon_{11}e^{2f}\partial_{x_3}f\,.
\end{eqnarray*}
This then implies
\begin{eqnarray*}
&&J_\pm^*\delta\Omega_\pm=2\mp\partial_{x_3}f\cdot dx^3,\\
&&dJ_\pm^*\delta\Omega_\pm=2\mp\partial_{x_1}\partial_{x_3}f\cdot dx^1\wedge dx^3\,.
\end{eqnarray*}
This has components in both $\Lambda^2_{0,\mp}$ and in $\Lambda^2_\pm$. The desired result now follows.

\section*{Acknowledgements}
Research of both authors supported by project MTM2009-07756 (Spain), by project 174012 (Serbia), and by DFG
PI 158/4-6 (Germany).


\begin{thebibliography}{aaa}

\bibitem{AR09}Aguilar, J. and Romero, C.:
Inducing the cosmological constant from five-dimensional Weyl space.
Found. Phys. {\bf 39} 1205--1216  (2009) 

\bibitem{AI03} Alexandrov B. and Ivanov S.: Weyl structures with positive Ricci tensor.
Differ. Geom. Appl. {\bf 18}, 343--350 (2003)

\bibitem{BGGH11} Brozos-V\'{a}zquez M., Garc\'{\i}a-R\'{\i}o E., 
Gilkey P., and Hervella L.:
Geometric realizability of covariant derivative K\"ahler tensors 
for almost pseudo-Hermitian and almost para-Hermitian
manifolds. To appear Ann. Mat. Pura Appl.,
http://arxiv.org/abs/1012.4964.

\bibitem{BGM10} Brozos-V\'azquez M., Gilkey P., and Merino E.:
Geometric realizations of Kaehler and of para-Kaehler curvature
models.
Int. J. Geom. Methods Mod. Phys {\bf 7} (2010), 505--515.



\bibitem{BGN11} Brozos-V{\'a}zquez M., Gilkey, P., and Nik\v cevi\'c, S.:
Geometric realizations of affine Kaehler curvature models.
Results Math. {\bf 59}, 507--521 (2011)

\bibitem{BGN11a} Brozos-V{\'a}zquez M., Gilkey, P., and Nik\v cevi\'c, S.:
The structure of the space of affine K\"ahler curvature tensors as a complex module.
Int. J. Geom. Methods Mod. Phys {\bf 8} (2011), 1849--1868

\bibitem{BGN12} Brozos-V\'azquez M., Gilkey P., and Nik\v cevi\'c S.:
Geometric Realizations of Curvature. Imperial College Press (2012)

\bibitem{BNGS06}  Bla\v zi\'c N., Gilkey P., Nik\v cevi\'c S., and Simon U.:
Algebraic theory of affine curvature tensors.
Archivum Mathematicum, Masaryk University (Brno, Czech Republic) ISSN 0044-8753, tomus {\bf 42}, supplement:
Proceedings of the 26th Winter School Geometry and Physics 2006 (SRNI), 147--168  (2006)


\bibitem{CP00} Calderbank D. and Pedersen H.:
Self dual spaces with complex structures, Einstein-Weyl geometry and geodesics.
Annales de l'institut Fourier {\bf 50}, 921--963 (2000)

\bibitem{C01} Calderbank D. and Tod  P.: Einstein metrics, hypercomplex structures and the Toda field equation.
Differ. Geom. Appl. {\bf 14}, 199--208 (2001).

\bibitem{D06}  Dunajski M.: Paraconformal geometry of $n^{\operatorname{th}}$-order ODEs, and exotic holonomy in dimension four.
J. Geom. Phys. {\bf  56},  1790--1809  (2006)

\bibitem{DT02} Dunajski M. and Tod P.: 
Einstein-Weyl spaces and dispersionless Kadomtsev-Petviashvili equation from Painleve I and II.
Phys. Lett., A {\bf 303}, 253--264 (2002)

\bibitem{FF11} Fatibene L., Ferraris M., Francaviglia M., and Mercadante S.: Further extended theories of graviatation part II.
Int. J. Geom. Methods Mod. Physics {\bf 7}, 899--906 (2010).

\bibitem{GI94} Ganchev G.  and Ivanov S.: 
Semi-symmetric $W$-metric connections and the $W$-conformal group.
God. Sofij. Univ. Fak. Mat. Inform. {\bf 81}, 181--193 (1994)

\bibitem{G09} Ghosh, A.: Einstein-Weyl structures on contact metric manifolds. Ann. Glob. Anal. Geom. {\bf 35}, 431-441 (2009)


\bibitem{BV07} Gilkey P.:
The Geometry of Curvature Homogeneous Pseudo-Riemannian Manifolds.
Imperial College Press (2007)

\bibitem{GNS11} Gilkey P., Nik\v cevi\'c S. , and Simon U.:
Geometric realizations, curvature decompositions, and Weyl manifolds.
J. Geom. Phys. {\bf 61}, 270--275 (2011)

\bibitem{H93} Higa T.:
Weyl manifolds and Einstein-Weyl manifolds.
Comm. Math. Univ. St. Pauli {\bf 42}, 143--160 (1993)

\bibitem{H94} Higa T.:
Curvature tensors and curvature conditions in Weyl geometry.
Comm. Math. Univ. St. Pauli {\bf 43}, 139--153 (1994)

\bibitem{I00} Itoh M.:
Affine locally symmetric structures and finiteness theorems for Einstein-Weyl manifolds.
Tokyo J. Math. {\bf 23}, 37--49 (2000)

\bibitem{I02} Ivanov S.: Geometry of quaternionic K\"ahler connections with torsion. J. Geom. Phys. {\bf  41},  235--257   (2002)


\bibitem{K11} Kalafat, M.:
Geometric invariant theory and Einstein-Weyl geometry. 
Expo. Math. {\bf 29}, 220--230 (2011)

\bibitem{KK10} Kokarev G. and Kotschick D.:
Fibrations and fundamental groups of K\"ahler-Weyl manifolds.
Proc. Am. Math. Soc. {\bf 138}, 997--1010 (2010)

\bibitem{M01} Matsuzoe H.:
Geometry of semi-Weyl manifolds and Weyl manifolds.
Kyushu J. Math. {\bf 55}, 107--117 (2001) 

\bibitem{M02} Matzeu P.:
Submanifolds of Weyl flat manifolds.
Monatsh. Math. {\bf136}, 297-311 (2002)

\bibitem{M04}  Miritzis J.:
Isotropic cosmologies in Weyl geometry. 
Classical Quantum Gravity {\bf 21}, 3043--3055 (2004)

\bibitem{N05} Nurowski P., Differential equations and conformal structures.
J. Geom. Phys. {\bf  55},  19--49   (2005) 

\bibitem{O11} Ozdeger, A: Conformal and generalized concircular mappings of Einstein--Weyl manifolds.
Acta Math. Sci. Ser. B Engl. Ed. {\bf 30}, 1739--1745 (2010)

\bibitem{PPS93} Pedersen H., Poon Y., and Swann A.: 
The Einstein-Weyl equations in complex and quaternionic geometry.
Diff. Geo. and Appl. {\bf 3}, 309--321 (1993)

\bibitem{PS91} Pedersen H. and Swann A.: 
Riemannian submersions, four manifolds, and Einstein-Weyl geometry.
Proc. London Math. Soc. {\bf 66}, 381--399 (1991)

\bibitem{PT93} Pedersen H. and Tod K.: 
Three-dimensional Einstein-Weyl geometry. 
Adv. Math. {\bf 97}, 74--109 (1993)

\bibitem{S09}  Scholz E.:
Cosmological spacetimes balanced by a Weyl geometric scale covariant scalar field.
Found. Phys. {\bf39}, 45--72  (2009)

\bibitem{ST69} Singer I. M. and Thorpe J. A.:
The curvature of $4$-dimensional Einstein spaces.
1969 Global Analysis (Papers in Honor of K. Kodaira), Univ. Tokyo Press, Tokyo, 355--365 (1969).



\bibitem{TV81} Tricerri F. and Vanhecke L.:
Curvature tensors on almost Hermitian manifolds.
Trans. Amer. Math. Soc. {\bf  267}, 365--397 (1981)

\bibitem{V82} Vaisman I.: 
Generalized Hopf manifolds. 
Geom. Dedicata {\bf 13}, 231--255 (1982)

\bibitem{V83} Vaisman I.:
A survey of generalized Hopf manifolds.
Differential Geometry on Homogeneous Spaces. 
Proc. Conf. Torino Italy, Rend. Semin. Mat. Torino, Fasc. Spec. 205--221 (1983)

\bibitem{W22} Weyl H.: 
Space-Time-Matter.
Dover Publ. 1922.
\end{thebibliography}
\end{document}